\newtheorem{thm}{Theorem}
\newtheorem{lemma}{Lemma}
\newtheorem{assumption}[thm]{Assumption}
\def \Er {\mathcal{E}}
\def \bdelta {\boldsymbol{\delta}}
\def \S {\mathcal{S}}
\def \R {\mathbb{R}}
\def \Se {\mathcal{S}}
\def \x {\mathbf{x}}
\def \xh {\widehat{\mathbf{x}}}
\def \wh {\widehat{\mathbf{w}}}
\def \q {\mathbf{q}}
\def \c {\mathbf{c}}
\def \v {\mathbf{v}}
\def \w {\mathbf{w}}
\def \b {\mathbf{b}}
\def \e {\mathbf{e}}
\def \y {\mathbf{y}}
\def \u {\mathbf{u}}
\def \Xh {\widehat X}
\def \yh {\widehat{\mathbf y}}
\def \xt {\widetilde{\mathbf{x}}}
\def \et {\widetilde{\mathbf{e}}}
\title{Fast Sparse Least-Squares Regression with  Non-Asymptotic Guarantees}
\author{
Tianbao Yang$^*$, Lijun Zhang$^\dagger$, Qihang Lin$^*$, Rong Jin$^\ddagger$\\
$^*$The University of Iowa, $^\dagger$Nanjing University, $^\ddagger$Alibaba Group\\
\texttt{tianbao-yang@uiowa.edu, zhanglj@lamda.nju.edu.cn} \\
\texttt{qihang-lin@uiowa.edu, jinrong.jr@alibaba-inc.com}
}
\begin{document}

\maketitle

\begin{abstract}
In this paper, we study  a fast approximation method for {\it large-scale high-dimensional} sparse least-squares regression problem  by exploiting the Johnson-Lindenstrauss (JL) transforms, which embed a set of high-dimensional vectors into a low-dimensional space. In particular,  we propose to apply the JL transforms to the data matrix and the target vector  and then to solve a   sparse least-squares problem on the compressed data with a {\it slightly larger regularization parameter}. Theoretically, we establish the optimization error bound of the learned model for two different sparsity-inducing regularizers, i.e., the elastic net and the $\ell_1$ norm. Compared with previous relevant work, our analysis is {\it non-asymptotic and exhibits more insights} on the bound,  the sample complexity and  the regularization. As an illustration, we also provide an error bound of the {\it Dantzig selector} under JL transforms.    
\end{abstract}

\section{Introduction}
\vspace*{-0.1in}
Given a data matrix $X\in\R^{n\times d}$ with each row representing an instance~\footnote{$n$ is the number of instances  and $d$ is the number of features.} and a target vector $\y=(y_1,\ldots, y_n)^{\top}\in\R^n$, the sparse least-squares regression (SLSR) is to solve the following optimization problem:
\vspace*{-0.15in}
\begin{align}\label{eqn:lsr}
\w_* =\arg\min_{\w\in\R^d} \frac{1}{2n}\|X\w - \y\|_2^2  + \lambda R(\w)
\end{align}
where $R(\w)$ is a sparsity-inducing norm. In this paper, we consider two widely used sparsity-inducing norms: (i) the $\ell_1$ norm that leads to a formulation also known as LASSO~\cite{tibshirani96regression};  (ii) the mixture of $\ell_1$ and $\ell_2$ norm that leads to a formulation known as the Elastic Net~\cite{hastieElasticNet}. 
Although $\ell_1$ norm has been widely explored  and studied in SLSR, the elastic net usually yields better performance when there are highly correlated variables. Most previous studies on SLSR revolved around on two intertwined topics:  sparse recovery analysis and  efficient optimization  algorithms. We aim to present a fast approximation method for solving SLSR with a strong guarantee on the optimization error. 

Recent years have witnessed unprecedented growth in both the scale and the dimensionality of data.  As the size of data continues to grow, solving the problem~(\ref{eqn:lsr}) is still computationally difficult because (i) the memory limitations could lead to increased additional costs (e.g., I/O costs, communication costs in distributed environment); (ii) a large number $n$ of instances or a high dimension $d$ of features  usually implies a slow convergence of optimization (i.e., a large iteration complexity). {\bf In this paper, we study a fast approximation method that employes the JL transforms to reduce the size of $X\in\R^{n\times d}$ and $\y\in\R^n$}. In particular, let $A\in\R^{m\times n} (m\ll n)$ denote a linear transformation that obeys the JL lemma (c.f. Lemma~\ref{lem:jl}), we transform the data matrix and the target vector into $\Xh = AX\in\R^{m\times d}$ and $\yh=A\y\in\R^m$. Then we optimize a {\bf slightly modified} SLSR problem using the compressed data $\Xh$ and $\yh$ to obtain an approximate solution $\wh_*$. The proposed method is supported by  (i)  a theoretical  analysis that provides a strong guarantee of the proposed approximation method on the optimization error of $\wh_*$ in both $\ell_2$ norm and $\ell_1$ norm, i.e., $\|\wh_* - \w_*\|_2$ and $\|\wh_* - \w_*\|_1$; and (ii) empirical studies  on  a synthetic data and a real dataset. 
We emphasize that besides in large-scale learning, the  approximation method by JL transforms  can be also used in privacy concerned applications, which is beyond the scope of this work. 

In fact, our work is not the first that employes random reduction techniques to reduce the size of the data for SLSR and studies the theoretical guarantee of the approximate solution. The most relevant work is presented by Zhou \& Lafferty \& Wasserman~\cite{DBLP:conf/nips/ZhouLW07} (referred to as Zhou's work). Below we highlight several key differences from Zhou's work, which also emphasize our contributions: 
\begin{itemize}
\vspace*{-0.1in}
\item Our formulation on the compressed data is different from that in Zhou's work, which simply solves the same SLSR problem using the compressed data. We introduce a slightly larger $\ell_1$ norm regularizer, which enjoys an intuitive geometric explanation. As a result, it also sheds lights on the Dantzig selector~\cite{CT07} under JL transforms, a theoretical result of which is also presented. 

\item Zhou's work focused on the $\ell_1$ regularized least-squares regression and the Gaussian random projection. We consider two sparsity-inducing regularizers including the elastic net and the $\ell_1$ norm. Since our analysis is based on the JL lemma, hence any JL transforms are applicable. 

\item Zhou's theoretical analysis is {\it asymptotic}, which only holds when the number of instances $n$ approaches infinity, and  it requires strong assumptions about the data matrix and other parameters for obtaining sparsitency (i.e., the recovery of the support set) and the persistency (i.e., the generalization performance).  In contrast, our analysis of the optimization error {\it relies  on relaxed assumptions and is non-asymptotic}. In particular, for the $\ell_1$ norm we assume the standard  restricted eigen-value condition in sparse recovery analysis. For the elastic net, by exploring the strong convexity of the regularizer, we can  be even exempted from the restricted eigen-value condition and can derive better bounds when the condition is true. 
\end{itemize}
\vspace*{-0.1in}
The remainder of the paper is organized as follows. In Section~\ref{sec:rw}, we review some related work. We present the proposed method and main results in Section~\ref{sec:main} and~\ref{sec:dant}. Numerical experiments will be presented in Section~\ref{sec:exp} followed by conclusions. 

\section{Related Work}\label{sec:rw}
\vspace*{-0.1in}
\paragraph{Sparse Recovery Analysis.} 
The LASSO problem has been one of  the core problems in statistics and machine learning, which is essentially to learn a high-dimensional sparse vector $\u_*\in\R^d$ from (potentially noise) linear measurements $\y = X\u_* + \xi\in\R^n$. A rich theoretical literature~\cite{tibshirani96regression,zhao-2006-model,journals/tit/Wainwright09a} describes the consistency, in particular the sign consistency, of various sparse  regression techniques. A stringent ``irrepresentable condition'' has been established  to achieve sign consistency. To circumvent the stringent assumption, several studies~\cite{Jia:arXiv1208.5584,paul2008} have proposed to precondition the data matrix $X$ and/or the target vector $\y$ by $PX$ and $P\y$ before solving the LASSO problem, where $P$ is usually a $n\times n$ matrix. 
The oracle inequalities  of the solution to LASSO~\cite{Bickel09simultaneousanalysis} and  other sparse estimators (e.g., the Dantzig selector~\cite{CT07}) have also been established under restricted eigen-value conditions of the data matrix $X$ and the Gaussian noise assumption of $\xi$. The focus in these studies is on when the number of measurements $n$ is much less than the number of features, i.e., $n\ll d$. {\it Different from these work, we consider that both $n$ and $d$ are significantly large~\footnote{This setting recently receives increasing interest~\cite{DBLP:conf/nips/YenLLRD14}.} and aim to derive fast algorithms for solving the SLSR problem approximately by exploiting  the JL transforms. The recovery analysis is centered on the optimization error of the learned model with respect to the optimal solution $\w_*$ to~(\ref{eqn:lsr}), which together with the oracle inequality  of $\w_*$ automatically leads to an oracle inequality of the learned model under the Gaussian noise assumption. }

\vspace*{-0.1in}
\paragraph{Approximate Least-squares Regression.}
In numerical linear algebra, one important problem is the over-constrained least-squares problem, i.e., finding a vector $\w_{opt}$ such that the Euclidean norm of the residual error $\|X\w - \y\|_2$ is minimized,
where the data matrix $X\in\R^{n\times d}$ has $n\gg d$. The exact solver takes $O(nd^2)$ time complexity. Several pieces of works have proposed randomized algorithms for finding an approximate solution to the above problem in $o(nd^2)$~\cite{Drineas:2011:FLS:1936922.1936925,conf/soda/DrineasMM06}.  These works share the same paradigm by applying an appropriate random matrix $A\in\R^{m\times n}$ to both $X$ and $\y$ and solving the induced subproblem, i.e.,  $\wh_{opt}=\arg\min_{\w\in\R^d}\|A(X\w - \y)\|_2$. 
Relative-error bounds for $\|\y - X\wh_{opt}\|_2$ and $\|\w_{opt}-\wh_{opt}\|_2$ have been developed. {\it Although the proposed method uses a similar idea to reduce the size of the data, there is a striking difference between our work and these studies  in that  we consider the sparse regularized least-squares problem when both $n$ and $d$ are very large.} As a consequence, the analysis and the required condition on $m$ are substantially different. The analysis for over-constrained least-squares relies on the low-rank of the data matrix $X$, while our analysis hinges on the inherent sparsity of the optimal solution $\w_*$. In terms of the value of $m$ for accurate recovery,  approximate least-squares regression requires  $m=O(d\log d/\epsilon^2)$. In contrast, for the proposed method, our analysis exhibits that the order of $m$ is $O(s\log d/\epsilon^2)$, where $s$ is the sparsity of the optimal solution $\w_*$ to~(\ref{eqn:lsr}). In addition, the proposed method can utilize any JL transforms as long as they obey the JL lemma. Therefore, our method can benefit from recent advances in sparser JL transforms, leading to a fast  transformation of the data. 

\vspace*{-0.1in}
\paragraph{Random Projection based Learning.}
Random projection has been employed for addressing the computational challenge of high-dimensional learning problems~\cite{ML06:Balcan}. In particular, if let $\x_1,\ldots,\x_n\in\R^d$ denote a set of instances, by random projection we can reduce the high-dimensional features into a low dimensional feature space by $\xh_i=A\x_i\in\R^{m}$, where $A\in\R^{m\times d}$ is a random projection matrix. Several works have studied some theoretical properties of learning in the low dimensional space. For example, \cite{RP:SVM} considered the following problem and its reduced counterpart (R):
\vspace*{-0.12in}
\begin{align*}
&\w_*=\arg \min_{\w\in\R^d}\frac{1}{n}\sum_{i=1}^n\ell(\w^{\top}\x_i, y_i) + \frac{\lambda}{2}\|\w\|_2^2,\quad\text{R: }\min_{\u\in\R^m}\frac{1}{n}\sum_{i=1}^n\ell(\u^{\top}\xh_i, y_i) + \frac{\lambda}{2}\|\u\|_2^2
\end{align*} 
Paul et al.~\cite{RP:SVM} focused on SVM and showed that the margin and minimum enclosing ball in the reduced feature space are preserved to within a small relative error provided that the data matrix $X\in\R^{n\times d}$ is of low-rank. Zhang et al.~\cite{DBLP:conf/colt/ZhangMJYZ13} studied the problem of recovering the original optimal solution $\w_*$ and proposed a dual recovery approach, i.e., using the learned dual variable in the reduced feature space to recover the model in the original feature space. They also established a recovery error under the low-rank assumption of the data matrix. Recently, the low-rank assumption is alleviated by the sparsity assumption. Zhang et al.~\cite{DBLP:journals/tit/0005MJYZ14}  considered a case when the optimal solution $\w_*$ is sparse and Yang et al.~\cite{DBLP:journals/corr/Yang0JZ15} assumed the optimal dual solution is sparse and proposed to solve a $\ell_1$ regularized dual formulation using the reduced data. They both established a recovery error in the order of $O(\sqrt{s/m}\|\w_*\|_2)$, where $s$ is the sparsity of the optimal primal solution or the optimal dual solution. Random projection for feature reduction has also been applied to the ridge regression problem~\cite{DBLP:conf/nips/MaillardM09}.  {\it However, these methods  do not apply to the SLSR problem and their analysis is developed mainly for the $\ell_2$ norm square regularizer.} In order to maintain the sparsity of $\w$, we consider compressing the data instead of the features so that the sparse regularizer is maintained for encouraging sparsity. Moreover, our analysis exhibits an recovery error in the order of $O(\sqrt{s/m}\|\e\|_2)$, where $\e=X\w_* - \y$ whose magnitude could be much smaller than $\w_*$.

\vspace*{-0.1in}
\paragraph{The JL Transforms.}
The JL transforms refer to a class of transforms that obey the JL lemma~\cite{citeulike:7030987}, which states that any $N$ points in Euclidean space can be embedded into $O(\epsilon^2\log N)$ dimensions so that all pairwise Euclidean distances are preserved upto $1 \pm \epsilon$. Since the original Johnson-Lindenstrauss result, many transforms have been designed  to satisfy  the JL lemma, including Gaussian random matrices~\cite{dasgupta-2003-jl}, sub-Gaussian random matrices~\cite{Achlioptas:2003:DRP:861182.861189}, randomized Hadamard transform~\cite{Ailon:2006:ANN:1132516.1132597}, sparse JL transforms by random hashing~\cite{DBLP:conf/stoc/DasguptaKS10,Kane:2014:SJT:2578041.2559902}. The analysis presented in this work builds upon  the JL lemma and therefore our method can enjoy the computational benefits  of sparse JL transforms including  less memory and fast computation. 

\section{A Fast Sparse Least-Squares Regression}\label{sec:main}
\vspace*{-0.1in}
\paragraph{Notations:} Let $(\x_i, y_i), i=1,\ldots, n$ be a set of $n$ training instances, where $\x_i\in\R^d$ and $y_i\in\R$. 
We refer to $X=(\x_1,\x_2,\ldots, \x_n)^{\top}=(\bar\x_{1},\ldots, \bar\x_d)\in\R^{n\times d}$ as the data matrix and to $\y=(y_1,\ldots, y_n)^{\top}\in\R^n$ as the target vector, where $\bar\x_j$ denotes the $j$ column of $X$. To facilitate our analysis, let $R$ be the upper bound of $\max_{1\leq j\leq d}\|\bar\x_j\|_2\leq R$. Denote by $\|\cdot\|_1$ and $\|\cdot\|_2$ the $\ell_1$ norm and the $\ell_2$ norm of a vector.  A function $f(\w):\R^d\rightarrow\R$ is $\lambda$-strongly convex with respect to $\|\cdot\|_2$ if  $\forall \w, \u\in\R^d$ it satisfies 
$f(\w) \geq f(\u) + \partial f(\u)^{\top}(\w - \u) + \frac{\lambda}{2}\|\w-\u\|_2^2$. 
A function $f(\w)$ is $L$-smooth with respect to $\|\cdot\|_2$ if for $\forall\w,\u\in\R^d$, 
$\|\nabla f(\w) - \nabla f(\w)\|_2\leq L\|\w - \u\|_2$, 
where $\partial f(\cdot)$ and $\nabla f(\cdot)$ denotes the sub-gradient and the gradient, respectively. In the analysis below for the LASSO problem, we will use the following restricted eigen-value condition~\cite{Bickel09simultaneousanalysis}. 
\begin{assumption}\label{ass:rc}
For any integer $1\leq s\leq d$, the matrix $X$ satisfies the restricted eigen-value condition at the sparsity level $s$ if there exist positive constants $\phi_{\min}(s)$ and $\phi_{\max}(s)$ such that 
\vspace*{-0.02in}
\begin{align*}
\phi_{\min}(s)=\min_{\w\in\R^d, 1\leq\|\w\|_0\leq s}\frac{\frac{1}{n}\w^{\top}X^{\top}X\w}{\|\w\|_2^2},\quad \text{and}\quad\quad \phi_{\max}(s)=\max_{\w\in\R^d, 1\leq\|\w\|_0\leq s}\frac{\frac{1}{n}\w^{\top}X^{\top}X\w}{\|\w\|_2^2}
\end{align*}
\end{assumption}
The goal of SLSR is to learn an optimal vector $\w_*=(w_{*1}, \ldots, w_{*d})^{\top}$ that minimizes the sum of the least-squares error and a sparsity-inducing regularizer.  We consider two different sparsity-inducing regularizers: (i) the $\ell_1$ norm: $R(\w) = \|\w\|_1 = \sum_{i=1}^d|w_i|$; (ii) the elastic net: $R(\w) = \frac{1}{2}\|\w\|_2^2 + \frac{\tau}{\lambda}\|\w\|_1$. 
Thus, we rewrite the problem in~(\ref{eqn:lsr}) into the following  form: 
\vspace*{-0.02in}
\begin{align}\label{eqn:lsr-2}
\w_* =\arg\min_{\w\in\R^d} \frac{1}{2n}\|X\w - \y\|_2^2  + \frac{\lambda}{2}\|\w\|_2^2 + \tau \|\w\|_{1}
\end{align}
When $\lambda=0$ the problem is the LASSO problem and when $\lambda>0$ the problem is the Elastic Net problem. 
Although many optimization algorithms have been developed for solving~(\ref{eqn:lsr-2}), they could still suffer from high computational complexities for large-scale high-dimensional data due to  (i) an $O(nd)$ memory complexity  and (ii) an $\Omega(nd)$ iteration complexity.  

To alleviate the two complexities, we consider using the JL transforms to reduce the size of data, which are discussed in more details in subsection~\ref{sec:jl}. In particular, we let $A\in\R^{m\times n}$ denote the transformation matrix corresponding to a JL transform, then we compute a compressed data by $\Xh = AX\in\R^{m\times d}$ and $\yh = A\y\in\R^m$, and then solve the following problem: 
\vspace*{-0.02in}
\begin{align}\label{eqn:lsr-r}
\wh_* =\arg\min_{\w\in\R^d} \frac{1}{2n}\|\Xh\w - \yh\|_2^2  + \frac{\lambda}{2}\|\w\|_2^2 + (\tau+\sigma) \|\w\|_{1}
\end{align}
where $\sigma>0$, whose theoretical value is exhibited later. We emphasize that to obtain a bound on the optimization error of $\wh_*$, i.e., $\|\wh_*-\w_*\|$,  it is important to increase the value of the regularization parameter before the $\ell_1$ norm. Intuitively, after compressing the data the optimal solution may become less sparse, hence increasing the regularization parameter can pull the solution towards closer to the original optimal solution. 

\textbf{Geometric Interpretation.} We  can also explain the added parameter $\sigma$ from a {\it geometric viewpoint}, which sheds insights on the theoretical value of $\sigma$ and the analysis for the Dantzig selector under JL transforms. Without loss of generality, we consider $\lambda=0$. Since $\w_*$ is the optimal solution to the original problem, then there exists a sub-gradient $g\in\partial \|\w_*\|_1$ such that 
$\frac{1}{n}X^{\top}(X\w_* - \y) + \tau g = 0$.  
Since $\|g\|_\infty\leq 1$, therefore $\w_*$ must satisfy 
$\frac{1}{n}\|X^{\top}(X\w_* - \y)\|_\infty\leq \tau$, 
which is also the constraint in the Dantzig selector. Similarly, the compressed problem~(\ref{eqn:lsr-r}) also defines a domain of the  optimal solution  $\wh_*$, i.e.,  
\vspace*{-0.1in}
\begin{align}\label{eqn:dome}
\widehat{\mathcal D}_{\w}=\left\{\w\in\R^d: \frac{1}{n}\|\Xh^{\top}(\Xh\w - \yh)\|_\infty\leq \tau + \sigma\right\}
\end{align}
It turns out that $\sigma$ is added to ensure that the original optimal solution $\w_*$ lies in $\widehat{\mathcal D}_{\w}$ provided that $\sigma$ is set appropriately, which can be verified as follows: 
\begin{align*}
&\frac{1}{n}\left\|\Xh^{\top}(\Xh\w_* - \yh)\right\|_\infty   = \frac{1}{n}\left\|X^{\top}(X\w_* - \y) + \Xh^{\top}(\Xh\w_* - \yh) -X^{\top}(X\w_* - \y)\right\|_\infty \\
&\leq \frac{1}{n}\|X^{\top}(X\w_* - \y)\|_\infty + \frac{1}{n}\left\|\Xh^{\top}(\Xh\w_* - \yh) -X^{\top}(X\w_* - \y)\right\|_\infty\\
& \leq \tau +  \frac{1}{n}\|X^{\top}(A^{\top}A-I)(X\w_* - \y) \|_\infty
\end{align*}
Hence, if we set $\sigma\geq \frac{1}{n}\|X^{\top}(A^{\top}A-I)(X\w_* - \y) \|_\infty$, it is guaranteed that $\w_*$  also lies in $\widehat{\mathcal D}_{\w}$. Lemma~\ref{lem:2} in subsection~\ref{sec:res} provides an upper bound $\frac{1}{n}\|X^{\top}(A^{\top}A-I)(X\w_* - \y) \|_\infty$, therefore exhibits a theoretical value of $\sigma$. The above explanation also sheds lights on the Dantzig selector under JL transforms as presented in Section~\ref{sec:dant}.

\subsection{Optimization}\label{sec:opt}
\vspace*{-0.1in}
Before presenting the theoretical guarantee of the obtained solution $\wh_*$, we compare the optimization of the original problem~(\ref{eqn:lsr-2}) and the compressed problem~(\ref{eqn:lsr-r}). In particular, we focus on $\lambda>0$ since the optimization of the problem with only $\ell_1$ norm can be completed by adding the $\ell_2$ norm square with a small value of $\lambda$~\cite{DBLP:conf/icml/Shalev-Shwartz014}. 

We choose the recently proposed accelerated stochastic proximal coordinate gradient method (APCG)~\cite{DBLP:conf/nips/LinLX14}. The reason are threefold: (i) it achieves  an accelerated convergence for optimizing~(\ref{eqn:lsr-2}), i.e., a linear convergence with a square root dependence on the condition number; (ii) it updates randomly selected coordinates of $\w$, which is well suited for solving~(\ref{eqn:lsr-r}) since the dimensionality $d$ is much larger than the equivalent number of examples $m$; (iii) it leads to a much simpler analysis of the condition number for the compressed problem~(\ref{eqn:lsr-r}).
First, we write the objective functions in~(\ref{eqn:lsr-2}) and~(\ref{eqn:lsr-r}) into the following general form:  
\begin{align}\label{eqn:general}
f(\w)  + \tau'\|\w\|_1 = \left(\frac{1}{2n}\|C\w - \b\|_2^2 + \frac{\lambda}{2}\|\w\|_2^2\right) + \tau'\|\w\|_1
\end{align}
where $C=(\c_1,\ldots, \c_d)\in\R^{N\times d}$. For simplicity, we consider the case when each block of coordinates corresponds to only one coordinate. The key assumption of APCG is that the function $f(\w)$ should be coordinate-wise smooth. To this end, we let $\e_j$ denote the $j$-th column of the identity matrix and note that 
\begin{align*}
\nabla f(\w) &= \frac{1}{n}C^{\top}C\w  - \frac{1}{n}C^{\top}\b+ \lambda \w,\quad \nabla_j f(\w) = \e_j^{\top}\nabla f(\w) = \frac{1}{n}\e_j^{\top}C^{\top}C\w + \lambda w_j - \frac{1}{n}[C^{\top}\b]_j
\end{align*}
Assume $\max_{1\leq j\leq d}\|\c_j\|_2\leq R_c$, then for any $h_j\in\R$, we have
\begin{align*}
|\nabla_jf(\w+h_j\e_j) - \nabla_j f(\w)|& = \left|\frac{1}{n}\e_j^{\top}C^{\top}C(\w + \e_jh_j)- \frac{1}{n}\e_j^{\top}C^{\top}C\w + \lambda h_j\right |\\
&\leq \left(\frac{1}{n}|\e_j^{\top}C^{\top}C\e_j| + \lambda\right)|h_j|\leq \left(\frac{R_c^2}{n} + \lambda\right)|h_j|
\end{align*}
Therefore $f(\w)$ is coordinate-wise smooth and the smooth parameter is $R_c^2/n + \lambda$. On the other hand $f(\w)$ is also $\lambda$-strongly convex function. Therefore the condition number that affects the iteration complexity is  $\kappa = \frac{R_c^2/n + \lambda}{\lambda}$, and the iteration complexity is given by 
\begin{align*}
O\left(d\sqrt{\kappa}\log(1/\epsilon_o)\right) = O\left(d\sqrt{\frac{R_c^2/n + \lambda}{\lambda}}\log(1/\epsilon_o)\right) = O\left(\left[d + d\sqrt{\frac{R_c^2}{n\lambda}}\right]\log(1/\epsilon_o)\right)
\end{align*}
where $\epsilon_o$ is an accuracy for optimization.  Since the per-iteration complexity of APCG for~(\ref{eqn:general}) is $O(N)$, therefore the time complexity is given by $\widetilde O\left(Nd + Nd\sqrt{\frac{R_c^2}{n\lambda}}\right)$, where $\widetilde O$ suppresses the logarithmic term. Next, we can analyze and compare the time complexity of optimization for~(\ref{eqn:lsr-2}) and~(\ref{eqn:lsr-r}).  For~(\ref{eqn:lsr-2}),  $N=n$ and $R_c=R$. For~(\ref{eqn:lsr-r}) $N=m$, and by the JL lemma for $A$ (Lemma~\ref{lem:jl}), with a high probability $1-\delta$ we  have 
$R_c = \max_{1\leq j\leq d}\|A\bar\x_j\|_2\leq \max_{1\leq j\leq d}\sqrt{1+\epsilon_m}\|\bar\x_j\|_2$, 
where $\epsilon_m = O(\sqrt{\log(d/\delta)/m})$. Let $m$ be sufficiently large, we can conclude that $R_c$ for $\Xh$ is $O(R)$. Therefore, the time complexities of APCG for solving~(\ref{eqn:lsr-2}) and~(\ref{eqn:lsr-r}) are
\begin{align*}
(2): O\left(\left[nd + dR\sqrt{\frac{n}{\lambda}}\right]\log(1/\epsilon_o)\right), \quad\quad (3): O\left(\frac{m}{n}\left[nd + dR\sqrt{\frac{n}{\lambda}}\right]\log(1/\epsilon_o)\right)
\end{align*}  
Hence, we can see that the optimization time complexity of APCG for solving~(\ref{eqn:lsr-r}) can be reduced upto  a factor of $1-\frac{m}{n}$, which is substantial when $m\ll n$. The total time complexity is discussed after we introduce the JL lemma. 

\subsection{JL Transforms and Running Time}\label{sec:jl}
\vspace*{-0.1in}
Since the proposed method builds on the JL transforms, we present a JL lemma and mention several JL transforms. 

\begin{lemma}\label{lem:jl}[JL Lemma~\cite{citeulike:7030987}]
For any integer $n > 0$, and any $0 < \epsilon,\delta < 1/2$, there exists a probability distribution on $m\times n$ real matrices $A$ such that there exists a small universal constant $c>0$ and for any fixed $\bar\x$ with a probability at least $1-\delta$, we have
\begin{align}\label{lem:JL}
\left|\|A\bar\x\|_2^2 - \|\bar\x\|^2_2\right|\leq c\sqrt{\frac{\log(1/\delta)}{m}}\|\bar\x\|_2^2
\end{align}
\end{lemma}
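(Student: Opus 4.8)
The plan is to establish the concentration form of the JL lemma via the classical Gaussian construction, which suffices to certify \emph{existence} of the required distribution. First I would take $A\in\R^{m\times n}$ to have i.i.d.\ entries $A_{ij}\sim\mathcal{N}(0,1/m)$ and fix an arbitrary $\bar\x\in\R^n$. Since both sides of~(\ref{lem:JL}) are homogeneous of degree two in $\|\bar\x\|_2$ and the event in question is scale-invariant (because $A(\lambda\bar\x)=\lambda A\bar\x$), I may assume $\|\bar\x\|_2=1$. Each coordinate $(A\bar\x)_i=\sum_j A_{ij}\bar x_j$ is then a centered Gaussian with variance $\frac1m\|\bar\x\|_2^2=\frac1m$, and the coordinates are independent, so $m\|A\bar\x\|_2^2=\sum_{i=1}^m(\sqrt m\,(A\bar\x)_i)^2$ is a sum of $m$ i.i.d.\ squared standard Gaussians, i.e.\ a $\chi^2_m$ variable. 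In particular $\E\|A\bar\x\|_2^2=1=\|\bar\x\|_2^2$, so the transform is unbiased and the whole task reduces to controlling the fluctuation of $Z:=\|A\bar\x\|_2^2$ about its mean.

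Next I would invoke a sharp two-sided tail bound for $Y:=mZ\sim\chi^2_m$. The cleanest route is the Laurent--Massart inequalities, $\Pr[Y-m\ge 2\sqrt{mu}+2u]\le e^{-u}$ and $\Pr[Y-m\le -2\sqrt{mu}]\le e^{-u}$. Setting $u=\log(2/\delta)$ and taking a union bound over the two tails yields $\Pr[\,|Y-m|\ge 2\sqrt{m\log(2/\delta)}+2\log(2/\delta)\,]\le\delta$. Dividing through by $m$ gives $|Z-1|\ge 2\sqrt{\log(2/\delta)/m}+2\log(2/\delta)/m$ with probability at most $\delta$. In the regime $m\gtrsim\log(1/\delta)$ (the only regime in which the claimed deviation falls below $1$ and is therefore meaningful) the linear term $\log(2/\delta)/m$ is dominated by the square-root term $\sqrt{\log(2/\delta)/m}$, so both can be folded into a single universal constant $c$, giving $|Z-1|\le c\sqrt{\log(1/\delta)/m}$ with probability $1-\delta$. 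Rescaling back to general $\bar\x$ reinstates the factor $\|\bar\x\|_2^2$ and establishes~(\ref{lem:JL}).

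I expect the main obstacle to be the $\chi^2_m$ tail bound itself: proving it from scratch requires computing the moment generating function $\E[e^{s(Y-m)}]=(1-2s)^{-m/2}e^{-sm}$ for $s<1/2$, optimizing the resulting Chernoff exponent over $s$, and carefully tracking the two-regime sub-exponential behavior---a sub-Gaussian $\sqrt{mu}$ term for small deviations and a sub-exponential $u$ term for large ones. This split is exactly what produces the two terms above, and extracting a clean universal constant from it is the delicate part; everything else (the coordinate-wise Gaussian computation, the homogeneity reduction, the union bound) is routine. I would also remark that the $\epsilon$ appearing in the hypothesis plays no role in the concentration form---it is vestigial from the standard $(1\pm\epsilon)$-distortion statement, which is recovered by identifying $\epsilon$ with $c\sqrt{\log(1/\delta)/m}$---and that the same concentration holds verbatim for sub-Gaussian, Hadamard, and sparse JL transforms, so the Gaussian construction above merely witnesses one admissible distribution.
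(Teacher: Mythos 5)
Your proposal is correct in substance, but note that the paper itself contains no proof of this lemma: it is imported as a black box, with the sentence immediately following the statement deferring to the cited literature (Dasgupta--Gupta, Achlioptas, Ailon--Chazelle, and the sparse JL transform papers). So there is no in-paper argument to compare against, and the right benchmark is that literature. Your route --- Gaussian entries $A_{ij}\sim\mathcal{N}(0,1/m)$, reduction by homogeneity to $\|\bar\x\|_2=1$, identification of $m\|A\bar\x\|_2^2$ with a $\chi^2_m$ variable, and a two-sided Laurent--Massart tail bound at $u=\log(2/\delta)$ --- is precisely the classical proof, essentially the Dasgupta--Gupta argument that the paper points to. Your observation that $\epsilon$ in the hypothesis is vestigial is also correct: the statement is the distributional (concentration) form of the lemma, in which the distortion level is pinned to $c\sqrt{\log(1/\delta)/m}$ rather than being a free parameter.

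One point you flag only parenthetically deserves to be made explicit, because it is the one place where your proof does not match the literal statement. The restriction to the regime $m\gtrsim\log(1/\delta)$ is not a convenience but a necessity for this construction: $\chi^2_m$ has sub-exponential, not sub-Gaussian, upper tails, so the deviation $2\sqrt{u/m}+2u/m$ genuinely cannot be folded into $c\sqrt{u/m}$ when $u\gg m$. For instance, with $m=1$ and $\delta\to 0$, the quantity $\|A\bar\x\|_2^2$ exceeds $1+c\sqrt{\log(1/\delta)}$ with probability far larger than $\delta$ for any fixed $c$, so the Gaussian ensemble does not satisfy the inequality as literally quantified (the statement places no constraint tying $m$ to $\delta$). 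Your proof therefore establishes the lemma only in the regime where the right-hand side is below a fixed constant, i.e. $m\geq c'\log(1/\delta)$. This is the standard reading, and it is the only regime the paper ever invokes --- all its applications take $m=\Theta(\epsilon^{-2}\log(d/\delta))$ with $\epsilon<1$, and the Jayram--Woodruff optimality result it cites shows no distribution can do better --- so your argument is adequate for the lemma as intended; just state the restriction on $m$ as an explicit hypothesis rather than an aside.
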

\vspace*{-0.1in}
In other words, in order to preserve the Euclidean norm  for any vector $\bar\x\in\{\bar\x_1,\ldots, \bar\x_d\}$ within a relative error $\epsilon$, we need to have $m=\Theta(\epsilon^{-2}\log(d/\delta))$.  Proofs of the JL lemma can be found in many studies (e.g.,~\cite{dasgupta-2003-jl,Achlioptas:2003:DRP:861182.861189,Ailon:2006:ANN:1132516.1132597,DBLP:conf/stoc/DasguptaKS10,Kane:2014:SJT:2578041.2559902}). The value of $m$ in the JL lemma is optimal~\cite{DBLP:journals/talg/JayramW13}. In these studies, different JL transforms $A\in\R^{m\times n}$ are also exhibited, including Gaussian random matrices~\cite{dasgupta-2003-jl}:, subGaussian random matrices~\cite{Achlioptas:2003:DRP:861182.861189}, randomized Hadamard transform~\cite{Ailon:2006:ANN:1132516.1132597} and sparse JL transforms~\cite{DBLP:conf/stoc/DasguptaKS10,Kane:2014:SJT:2578041.2559902}. For more discussions on these JL transforms, we refer the readers to~\cite{DBLP:journals/corr/Yang0JZ15}. 

\textbf{Transformation time  complexity and Total Amortizing time complexity.}
Among all the JL transforms mentioned above, the transform using the  Gaussian random matrices is the most expensive that takes $O(mnd)$ time complexity when applied to $X\in\R^{n\times d}$, while randomized Hadamard transform and sparse JL transforms can reduce it to $\widetilde O(nd)$ where $\widetilde O(\cdot)$ suppresses only a logarithmic factor. Although the transformation time complexity still scales as $nd$, the computational benefit of the JL transform can become more  prominent when we consider the amortizing time complexity. In particular, in machine learning, we usually need to tune the regularization parameters (aka cross-validation) to achieve a better generalization performance. Let $K$ denote the total number of times of solving~(\ref{eqn:lsr-2}) or~(\ref{eqn:lsr-r}), then the amortizing time complexity is given by $\text{time}_{proc} + K\cdot{time}_{opt}$, where $\text{time}_{proc}$ refers to the time of the transformation (zero for solving~(\ref{eqn:lsr-2})) and $\text{time}_{opt}$ is the optimization time.  Since $\text{time}_{opt}$ for~(\ref{eqn:lsr-r}) is reduced significantly, hence the total amortizing time complexity of the proposed method for SLSR is much reduced.  

\subsection{Theoretical Guarantees}\label{sec:res}
\vspace*{-0.1in}
Next, we present the theoretical guarantees on the optimization error of the obtained solution $\wh_*$.   {\it We emphasize that one can easily obtain  the oracle inequalities for $\wh_*$ using the optimization error and the oracle inequalities of $\w_*$~\cite{Bickel09simultaneousanalysis} under the Gaussian noise model, which are omitted here}. We use the  notation $\e$ to denote  $X\w_* - \y=\e$ and assume $\|\e\|_2\leq \eta$. Again, we denote by $R$ the upper bound of column vectors in $X$, i.e., $\max_{1\leq j\leq d}\|\bar\x_i\|_2\leq R$. We first present two technical lemmas. All proofs are included in the appendix. 

\begin{lemma}\label{lem:2}
Let 
$\displaystyle\q  = \frac{1}{n}X^{\top}(A^{\top}A -I )\e$. 
With a probability at least $1-\delta$, we have
\vspace*{-0.1in}
\[
\|\q\|_\infty\leq \frac{c\eta R}{n}\sqrt{\frac{\log(d/\delta)}{m}},
\]
where $c$ is the universal constant in the JL Lemma.
\end{lemma}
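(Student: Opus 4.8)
The plan is to reduce the $\ell_\infty$ bound to a union bound over the $d$ coordinates of $\q$. The $j$-th entry is $q_j = \frac{1}{n}\bar\x_j^{\top}(A^{\top}A - I)\e$, so that $\|\q\|_\infty = \max_{1\le j\le d}|q_j|$, and it suffices to control each scalar $\bar\x_j^{\top}(A^{\top}A - I)\e = \langle A\bar\x_j, A\e\rangle - \langle \bar\x_j, \e\rangle$ with high probability and then take a union bound over $j$. The essential difficulty is that Lemma~\ref{lem:jl} only controls the distortion of a \emph{single} squared norm $\|A\bar\x\|_2^2 - \|\bar\x\|_2^2$ for a fixed vector, whereas the quantity above is an \emph{inner product} between two different vectors; the main work is to bridge this gap.

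The device I would use is the polarization identity. After discarding the trivial cases $\bar\x_j = \mathbf 0$ or $\e = \mathbf 0$ (where $q_j = 0$ and the bound holds vacuously), I normalize by setting $\hat u_j = \bar\x_j/\|\bar\x_j\|_2$ and $\hat v = \e/\|\e\|_2$, so that $\bar\x_j^{\top}(A^{\top}A - I)\e = \|\bar\x_j\|_2\|\e\|_2\,\hat u_j^{\top}(A^{\top}A - I)\hat v$. Polarization then rewrites the unit-vector bilinear form as
\[
\hat u_j^{\top}(A^{\top}A - I)\hat v = \tfrac14\Big[\big(\|A(\hat u_j+\hat v)\|_2^2 - \|\hat u_j+\hat v\|_2^2\big) - \big(\|A(\hat u_j-\hat v)\|_2^2 - \|\hat u_j-\hat v\|_2^2\big)\Big],
\]
expressing the inner-product distortion purely through two squared-norm distortions, each of which is exactly what Lemma~\ref{lem:jl} bounds. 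Applying the JL Lemma to the two fixed vectors $\hat u_j+\hat v$ and $\hat u_j-\hat v$ (each at confidence $1-\delta'$) and invoking the parallelogram law $\|\hat u_j+\hat v\|_2^2 + \|\hat u_j-\hat v\|_2^2 = 2\|\hat u_j\|_2^2 + 2\|\hat v\|_2^2 = 4$, the factor $\tfrac14$ cancels cleanly and gives $|\hat u_j^{\top}(A^{\top}A - I)\hat v| \le c\sqrt{\log(1/\delta')/m}$. Scaling back by $\|\bar\x_j\|_2 \le R$ and $\|\e\|_2 \le \eta$ yields the per-coordinate estimate $|q_j| \le \frac{cR\eta}{n}\sqrt{\log(1/\delta')/m}$.

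It then remains to take a union bound. At most $2d$ fixed vectors $\hat u_j \pm \hat v$ enter the argument, so choosing $\delta' = \delta/(2d)$ caps the total failure probability at $\delta$, and $\log(1/\delta') = \log(2d/\delta)$ exceeds $\log(d/\delta)$ only by the additive constant $\log 2$, which is harmlessly absorbed into the universal constant. This produces $\|\q\|_\infty \le \frac{c\eta R}{n}\sqrt{\log(d/\delta)/m}$, as claimed. I expect the only genuinely delicate step to be the polarization reduction combined with the parallelogram identity, since that is what both converts the JL norm-preservation statement into inner-product control and keeps the constant tight enough to match the stated form; the normalization and the final union bound are routine.
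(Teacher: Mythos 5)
Your proof is correct and takes essentially the same route as the paper's: normalize $\bar\x_j$ and $\e$ to unit vectors, use the polarization identity together with the JL lemma applied to the sum and difference vectors and the parallelogram law to bound each coordinate of $\q$, then scale back by $\|\bar\x_j\|_2\|\e\|_2\le R\eta$ and take a union bound over the $d$ columns. Your bookkeeping of the union bound (taking $\delta'=\delta/(2d)$ and absorbing the $\log 2$ into the universal constant) is, if anything, slightly more explicit than the paper's own treatment.
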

\begin{lemma}\label{lem:3}
Let 
$\displaystyle \rho(s) = \max_{\|\w\|_2\leq 1, \|\w\|_1\leq \sqrt{s}}\frac{1}{n}\left|\w^{\top}(X^{\top}X - \Xh^{\top}\Xh)\w\right|$.  
If $X$ satisfies the restricted eigen-value condition as in \textbf{Assumption}~\ref{ass:rc},  then with a probability at least $1-\delta$, we have
\[
\rho(s)\leq 16c\phi_{\max}(s)\sqrt{\frac{\log(1/\delta) + 2s\log(36d/s)}{m}},
\]
where $c$ is the universal constant in the JL lemma. 
\end{lemma}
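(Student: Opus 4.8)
The plan is to first expose the Johnson--Lindenstrauss structure hidden in the quadratic form. Since $\Xh = AX$, we have $X^{\top}X - \Xh^{\top}\Xh = X^{\top}(I - A^{\top}A)X$, so for every $\w$,
\begin{align*}
\frac{1}{n}\w^{\top}(X^{\top}X - \Xh^{\top}\Xh)\w = \frac{1}{n}\left(\|X\w\|_2^2 - \|AX\w\|_2^2\right).
\end{align*}
Writing $\z = X\w$, the quantity $\rho(s)$ is precisely the worst-case relative distortion $\frac1n\bigl|\|\z\|_2^2 - \|A\z\|_2^2\bigr|$ of the JL map over the image of the body $T = \{\w : \|\w\|_2\le 1,\ \|\w\|_1\le\sqrt{s}\}$. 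Lemma~\ref{lem:jl} controls this distortion for a single fixed vector, so the entire difficulty is upgrading that pointwise guarantee to a bound that holds \emph{uniformly} over the infinite family $T$.

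The second step is a geometric reduction to sparse vectors. The body $T$ is not sparse, but it is well approximated by sparse vectors: splitting the coordinates of $\w\in T$ into consecutive blocks $\w_{(1)},\w_{(2)},\ldots$ of size $s$ ordered by magnitude and using $\sum_{j\ge 1}\|\w_{(j)}\|_2 \le \|\w\|_1/\sqrt{s}\le 1$ yields the standard inclusion $T\subseteq 2\,\mathrm{conv}(\Sigma_s)$, where $\Sigma_s = \{\v : \|\v\|_0\le s,\ \|\v\|_2\le 1\}$. Let $M = \frac1n(X^{\top}X - \Xh^{\top}\Xh)$. For any convex combination $\w=\sum_k\lambda_k\u_k$ of points of $\Sigma_s$ we have $|\w^{\top}M\w| = |\sum_{k,l}\lambda_k\lambda_l\,\u_k^{\top}M\u_l|\le\max_{k,l}|\u_k^{\top}M\u_l|$, and polarization expresses $\u_k^{\top}M\u_l$ through $M$ evaluated on the $2s$-sparse vectors $\u_k\pm\u_l$ (with $\ell_2$ norm at most $2$), giving $|\u_k^{\top}M\u_l|\le 2P$ for $P := \sup\{|\v^{\top}M\v|:\|\v\|_0\le 2s,\ \|\v\|_2\le 1\}$. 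Tracking the factor $2$ from the inclusion $T\subseteq 2\,\mathrm{conv}(\Sigma_s)$ and the factor $2$ from polarization, this reduces $\rho(s)$ to $8P$.

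The third step is a net-plus-union-bound estimate of $P$. For each of the $\binom{d}{2s}$ supports $S$ of size $2s$, take a $\gamma$-net of the unit ball of $\R^{S}$, of cardinality at most $(1+2/\gamma)^{2s}$; the union $\mathcal N$ over all supports satisfies $\log|\mathcal N|\le 2s\log(36d/s)$ for an appropriate choice of $\gamma$ (using $\binom{d}{2s}\le(ed/2s)^{2s}$). For each fixed $\v\in\mathcal N$ I apply Lemma~\ref{lem:jl} to $\z=X\v$ with confidence parameter $\delta/|\mathcal N|$; since $\v$ is $2s$-sparse with $\|\v\|_2\le 1$, Assumption~\ref{ass:rc} bounds $\|X\v\|_2^2=\v^{\top}X^{\top}X\v\le n\phi_{\max}(2s)$, so $\frac1n\bigl|\|X\v\|_2^2-\|AX\v\|_2^2\bigr|\le c\,\phi_{\max}(2s)\sqrt{(\log|\mathcal N|+\log(1/\delta))/m}$. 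A union bound makes this hold for all $\v\in\mathcal N$ simultaneously with probability at least $1-\delta$.

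The final step transfers from the net to the supremum defining $P$ using the standard estimate $\sup_{\v}|\v^{\top}M\v|\le(1-2\gamma)^{-1}\max_{\v\in\mathcal N}|\v^{\top}M\v|$ on each support, hence uniformly; with $\gamma=1/4$ this costs a factor $2$, and combined with the factor $8$ above produces the leading constant $16c$ and the term $2s\log(36d/s)$. The main obstacle is exactly this uniformization: the pointwise JL bound is immediate, but obtaining the correct $s\log(d/s)$ dependence (rather than an uncontrolled dependence on $d$) requires the geometric passage to sparse vectors together with careful accounting of the covering number against the union-bound confidence budget. The remaining work is routine bookkeeping of the absolute constants and of the sparsity level ($2s$ versus $s$ in the eigenvalue factor $\phi_{\max}$) needed to match the stated form.
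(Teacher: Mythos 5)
Your proof is correct in substance and reaches the stated bound up to the eigenvalue factor, following the same skeleton as the paper's: the reduction $\mathcal{K}_{d,s}\subseteq 2\,\mathrm{conv}(\mathcal{S}_{d,s})$ where $\mathcal{S}_{d,s}=\{\v:\|\v\|_0\le s,\ \|\v\|_2\le 1\}$, a pointwise JL bound combined with a union bound over a net of sparse vectors, and a net-to-continuum transfer. The implementation differs in two genuine ways. First, the paper never forms $2s$-sparse vectors: it keeps the bilinear form $\frac{1}{n}(X\u_1)^{\top}(I-A^{\top}A)(X\u_2)$ over pairs $\u_1,\u_2\in\mathcal{S}_{d,s}$ and performs the polarization on the \emph{normalized image} vectors $X\u_i/\|X\u_i\|_2$, exactly as in the proof of Lemma~\ref{lem:2}; this is why its bound carries $\phi_{\max}(s)$. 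You instead polarize in the \emph{domain}, so the JL lemma is applied to $X\v$ with $\v$ being $2s$-sparse, and the restricted eigenvalue enters at level $2s$. Second, for the transfer from the net to the supremum the paper needs its Lemmas~\ref{lemma:discrete-bound} and~\ref{lemma:discrete-bound-2}, built on an orthogonal-decomposition trick (Lemma 9.2 of Koltchinskii), because the difference of two $s$-sparse vectors is only $2s$-sparse and hence leaves the index set; your support-by-support argument sidesteps this entirely, since within a fixed support the standard symmetric-matrix net lemma $\sup_{\v}|\v^{\top}M\v|\le(1-2\gamma)^{-1}\max_{\v\in\mathcal{N}}|\v^{\top}M\v|$ applies directly. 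Your route is therefore more elementary; what it costs is the eigenvalue factor: as written you prove the lemma with $\phi_{\max}(2s)$ in place of $\phi_{\max}(s)$, and this is not pure bookkeeping. You must either accept the (qualitatively equivalent but formally weaker) factor $\phi_{\max}(2s)$, or invoke $\phi_{\max}(2s)\le 2\phi_{\max}(s)$ (split a $2s$-sparse vector into two disjointly supported $s$-sparse halves and use $\|X\v_1+X\v_2\|_2^2\le 2\|X\v_1\|_2^2+2\|X\v_2\|_2^2$), which doubles your leading constant to $32c$. Since nothing downstream (Theorems~\ref{thm:lasso} and~\ref{thm:dan}) depends on the constant $16$, this discrepancy is cosmetic, but you should state explicitly which of the two resolutions you adopt rather than defer it to ``routine bookkeeping.''
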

{\bf Remark: } Lemma~\ref{lem:2} is used in the analysis for Elastic Net, LASSO and Dantzig selector. Lemma~\ref{lem:3} is used in the analysis for LASSO and Dantzig selector. 

\begin{thm}[Optimization Error for Elastic Net]\label{thm:net}
Let $\sigma = \Theta\left(\frac{\eta R}{n}\sqrt{\frac{\log(d/\delta)}{m}}\right)\geq\frac{2c\eta R}{n}\sqrt{\frac{\log(d/\delta)}{m}}$, where $c$ is an universal constant  in the JL lemma. Let $\w_*$ and $\wh_*$ be the optimal solutions to~(\ref{eqn:lsr-2}) and~(\ref{eqn:lsr-r}) for $\lambda>0$, respectively. 
Then with a probability at least $1-\delta$, for $p=1$ or $2$ we have 
\begin{align*}
\|\wh_* - \w_*\|_p\leq O\left(\frac{\eta R}{n\lambda}\sqrt{\frac{s^{2/p}\log(d/\delta)}{m}}\right).
\end{align*}
\end{thm}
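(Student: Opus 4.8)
\emph{Proof proposal.} The plan is to work entirely from the first-order optimality conditions of the two problems and to exploit the $\lambda$-strong convexity contributed by the $\frac{\lambda}{2}\|\w\|_2^2$ term, thereby sidestepping any uniform control of the quadratic form $\Delta^{\top}(\Xh^{\top}\Xh - X^{\top}X)\Delta$ (which is exactly what Lemma~\ref{lem:3} and the restricted eigen-value condition would be needed for, and precisely what the Remark promises to avoid for the Elastic Net). Write $\Delta = \wh_* - \w_*$, and let $\hat f(\w) = \frac{1}{2n}\|\Xh\w - \yh\|_2^2 + \frac{\lambda}{2}\|\w\|_2^2$ and $f(\w) = \frac{1}{2n}\|X\w - \y\|_2^2 + \frac{\lambda}{2}\|\w\|_2^2$ denote the smooth parts of~(\ref{eqn:lsr-r}) and~(\ref{eqn:lsr-2}). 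Both are $\lambda$-strongly convex, since their Hessians are bounded below by $\lambda I$, the data blocks $\frac1n X^{\top}A^{\top}AX$ and $\frac1n X^{\top}X$ being positive semidefinite.

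First I would record the composite optimality conditions. Since $\wh_*$ minimizes~(\ref{eqn:lsr-r}), convexity of $\|\cdot\|_1$ gives $\langle\nabla\hat f(\wh_*), \Delta\rangle \leq (\tau+\sigma)(\|\w_*\|_1 - \|\wh_*\|_1)$, and similarly optimality of $\w_*$ for~(\ref{eqn:lsr-2}) gives $\langle\nabla f(\w_*), \Delta\rangle \geq \tau(\|\w_*\|_1 - \|\wh_*\|_1)$. Next, gradient monotonicity of the $\lambda$-strongly convex $\hat f$ yields $\langle\nabla\hat f(\wh_*) - \nabla\hat f(\w_*), \Delta\rangle \geq \lambda\|\Delta\|_2^2$. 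Chaining these with the crucial identity $\nabla\hat f(\w_*) - \nabla f(\w_*) = \frac1n X^{\top}(A^{\top}A - I)\e = \q$ (recall $\e = X\w_* - \y$), the linear terms in $\nabla f(\w_*)$ and the factors $\tau$ cancel, and I expect to land on the clean basic inequality
\begin{align*}
\lambda\|\Delta\|_2^2 \leq \sigma(\|\w_*\|_1 - \|\wh_*\|_1) - \q^{\top}\Delta.
\end{align*}
This is the heart of the argument: strong convexity absorbs the entire data-fitting distortion, so the only stochastic quantity that survives is the fixed-point vector $\q$, which is controlled by Lemma~\ref{lem:2} rather than by any uniform estimate.

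From here the finish is routine. Hölder gives $-\q^{\top}\Delta \leq \|\q\|_\infty\|\Delta\|_1$, and Lemma~\ref{lem:2} together with the choice $\sigma \geq \frac{2c\eta R}{n}\sqrt{\log(d/\delta)/m}$ yields $\|\q\|_\infty \leq \sigma/2$. Writing $S = \mathrm{supp}(\w_*)$ with $|S| = s$ and using $\|\w_*\|_1 - \|\wh_*\|_1 \leq \|\Delta_S\|_1 - \|\Delta_{S^c}\|_1$, the inequality collapses to $\lambda\|\Delta\|_2^2 \leq \frac{3\sigma}{2}\|\Delta_S\|_1 - \frac{\sigma}{2}\|\Delta_{S^c}\|_1 \leq \frac{3\sigma}{2}\|\Delta_S\|_1$. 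Cauchy–Schwarz, $\|\Delta_S\|_1 \leq \sqrt s\,\|\Delta\|_2$, then delivers $\|\Delta\|_2 \leq \frac{3\sigma\sqrt s}{2\lambda}$, which is the $p=2$ claim after substituting $\sigma$. For $p=1$, the same display forces the cone condition $\|\Delta_{S^c}\|_1 \leq 3\|\Delta_S\|_1$, whence $\|\Delta\|_1 \leq 4\|\Delta_S\|_1 \leq 4\sqrt s\,\|\Delta\|_2 = O(\sigma s/\lambda)$, matching the advertised $s^{2/p}$ scaling.

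The main obstacle I anticipate is not any individual estimate but the structural design of the chaining in the second step: one must combine the two variational inequalities and gradient monotonicity in exactly the right order, and insert the identity $\nabla\hat f(\w_*) - \nabla f(\w_*) = \q$ at exactly the right place, so that the $\tau$-weighted $\ell_1$ differences and the $\nabla f(\w_*)$ terms cancel and only $\sigma$ and $\q$ remain. Securing this cancellation is what makes the Elastic Net analysis independent of the restricted eigen-value condition; the role of the enlarged regularizer $\sigma$ is then precisely to dominate $\|\q\|_\infty$ so that the residual $\ell_1$ slack stays inside the $\sigma$-scaled sparsity cone.
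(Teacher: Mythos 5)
Your proposal is correct and takes essentially the same route as the paper's own proof: both compare the optimality conditions of the two problems so that the only stochastic term left is $\q = \frac{1}{n}X^{\top}(A^{\top}A - I)\e$, bound it by Lemma~\ref{lem:2}, use $\sigma \geq 2\|\q\|_\infty$ to obtain the cone condition $\|\Delta_{\S^c}\|_1 \leq 3\|\Delta_\S\|_1$, and finish with $\lambda$-strong convexity and Cauchy--Schwarz, yielding the identical $p=1,2$ bounds. The only difference is bookkeeping: you chain variational inequalities with gradient monotonicity of $\hat f$, whereas the paper uses the strong-convexity function-value inequality together with an explicitly constructed subgradient at $\w_*$ ($g_i = h_i$ on $\S$, $g_i = \mathrm{sign}(\widehat w_{*i})$ on $\S_c$); both yield the same basic inequality.
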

{\bf Remark: } First, we can see that the value of $\sigma$ is large than $\|\q\|_\infty$ with a high probability due to Lemma~\ref{lem:2}, which is consistent with our geometric interpretation.   The upper bound of the optimization error exhibits several interesting properties: (i) the term of $\sqrt{\frac{s^{2/p}\log(d/\delta)}{m}}$ occurs commonly in theoretical results of sparse recovery~\cite{vladimir-2011-oracle}; (ii) the term of  $R/\lambda$ is related to the condition number of the optimization problem~(\ref{eqn:lsr-2}), which reflects the intrinsic difficulty of optimization; and (iii) the term of $\eta/n$ is related to the empirical error of the optimal solution $\w_*$. This term makes sense because  if $\eta=0$ indicating that the optimal solution $\w_*$ satisfies $X\w_* -\y =0$, then it is straightforward to verify that $\w_*$ also satisfies the optimality condition of~(\ref{eqn:lsr-2}) for $\sigma=0$. Due to the uniqueness of the optimal solution to~(\ref{eqn:lsr-2}), thus $\wh_* = \w_*$.


\begin{thm}[Optimization Error for LASSO]\label{thm:lasso} Assume $X$ satisfies the restricted eigen-value condition in \textbf{Assumption}~\ref{ass:rc}. 
Let $\sigma = \Theta\left(\frac{\eta R}{n}\sqrt{\frac{\log(d/\delta)}{m}}\right)\geq\frac{2c\eta R}{n}\sqrt{\frac{\log(d/\delta)}{m}}$, where $c$ is an universal constant  in the JL lemma. Let $\w_*$ and $\wh_*$ be the optimal solutions to~(\ref{eqn:lsr-2}) and~(\ref{eqn:lsr-r}) with $\lambda=0$, respectively, and  $\Lambda=\phi_{\min}(16s) - 2\rho(16s)$. 
Assume $\Lambda>0$, then with a probability at least $1-\delta$, for $p=1$ or $2$ we have
\vspace*{-0.15in}
\begin{align*}
\|\wh_* - \w_*\|_p\leq O\left(\frac{\eta R}{n\Lambda}\sqrt{\frac{s^{2/p}\log(d/\delta)}{m}}\right)
\end{align*}
\end{thm}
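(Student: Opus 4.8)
The plan is to control the error vector $\bdelta := \wh_* - \w_*$ by a two-sided argument: a \emph{basic inequality} coming from the optimality of $\wh_*$ produces an upper bound on a quadratic form in $\bdelta$, while a restricted-eigenvalue argument corrected for the JL perturbation (via Lemma~\ref{lem:3}) produces a matching lower bound. Throughout, let $S = \mathrm{supp}(\w_*)$, so $|S|\le s$, and recall $\e = X\w_*-\y$.

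First I would write the basic inequality. Since $\wh_*$ minimizes~(\ref{eqn:lsr-r}) with $\lambda=0$ and $\w_*$ is feasible, expanding $\|\Xh\wh_* - \yh\|_2^2$ around $\Xh\w_* - \yh = A\e$ gives
\[
\frac{1}{2n}\|\Xh\bdelta\|_2^2 + \frac1n \e^\top A^\top A X\bdelta + (\tau+\sigma)\|\wh_*\|_1 \le (\tau+\sigma)\|\w_*\|_1 .
\]
I would then split $A^\top A = I + (A^\top A - I)$, so that $\frac1n\e^\top A^\top A X\bdelta = \frac1n\e^\top X\bdelta + \q^\top\bdelta$ with $\q$ as in Lemma~\ref{lem:2}, and substitute the optimality condition of $\w_*$, namely $\frac1n X^\top\e = -\tau g$ for some $g\in\partial\|\w_*\|_1$ with $\|g\|_\infty\le 1$. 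Using in addition the decomposability of the $\ell_1$ norm at the sparse point $\w_*$, i.e.\ $\|\wh_*\|_1 \ge \|\w_*\|_1 + g_S^\top\bdelta_S + \|\bdelta_{S^c}\|_1$, the $\tau$ terms cancel and the inequality collapses to
\[
\frac{1}{2n}\|\Xh\bdelta\|_2^2 + (\sigma - \|\q\|_\infty)\|\bdelta_{S^c}\|_1 \le (\sigma + \|\q\|_\infty)\|\bdelta_S\|_1 .
\]

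Now Lemma~\ref{lem:2} together with the choice $\sigma \ge 2\|\q\|_\infty$ does two jobs at once. Dropping the nonnegative quadratic term yields the \emph{cone condition} $\|\bdelta_{S^c}\|_1 \le 3\|\bdelta_S\|_1$, hence $\|\bdelta\|_1 \le 4\|\bdelta_S\|_1 \le 4\sqrt{s}\,\|\bdelta\|_2 = \sqrt{16s}\,\|\bdelta\|_2$, which is exactly why the sparsity level $16s$ enters. Dropping instead the nonnegative $\bdelta_{S^c}$ term yields the upper bound $\frac{1}{2n}\|\Xh\bdelta\|_2^2 \le \tfrac32\sigma\|\bdelta_S\|_1 \le \tfrac32\sigma\sqrt{s}\,\|\bdelta\|_2$. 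For the lower bound, since $\bdelta$ obeys $\|\bdelta\|_1\le\sqrt{16s}\,\|\bdelta\|_2$, I would pass from the compressed Gram matrix to the original one via Lemma~\ref{lem:3}, $\frac1n\bdelta^\top\Xh^\top\Xh\bdelta \ge \frac1n\bdelta^\top X^\top X\bdelta - \rho(16s)\|\bdelta\|_2^2$, and then lower-bound $\frac1n\bdelta^\top X^\top X\bdelta$ by the restricted eigenvalue $\phi_{\min}(16s)$ corrected by a further $\rho(16s)$ term, producing $\frac1n\|\Xh\bdelta\|_2^2 \ge \Lambda\|\bdelta\|_2^2$ with $\Lambda = \phi_{\min}(16s) - 2\rho(16s) > 0$. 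Combining with the upper bound gives $\tfrac12\Lambda\|\bdelta\|_2^2 \le \tfrac32\sigma\sqrt{s}\,\|\bdelta\|_2$, i.e.\ $\|\bdelta\|_2 \le 3\sigma\sqrt{s}/\Lambda$; substituting $\sigma = \Theta(\tfrac{\eta R}{n}\sqrt{\log(d/\delta)/m})$ gives the $p=2$ bound, and the cone inequality $\|\bdelta\|_1 \le 4\sqrt{s}\,\|\bdelta\|_2$ upgrades it to $p=1$, matching the $s^{2/p}$ dependence.

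I expect the main obstacle to be the lower-bound step: the cone vector $\bdelta$ is \emph{not} sparse, so it must be reconciled with the sparse restricted-eigenvalue definition in Assumption~\ref{ass:rc}, and the JL perturbation must be tracked carefully so that the cone-to-sparse correction and the compression correction combine into the single effective constant $\Lambda = \phi_{\min}(16s) - 2\rho(16s)$. Once the cone condition and the two master inequalities are established, the rest is routine manipulation.
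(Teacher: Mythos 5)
Your strategy is essentially the paper's, and most of it is executed correctly. The master inequality you derive,
$\frac{1}{2n}\|\Xh\bdelta\|_2^2 + (\sigma - \|\q\|_\infty)\|[\bdelta]_{\S^c}\|_1 \leq (\sigma + \|\q\|_\infty)\|[\bdelta]_{\S}\|_1$ with $\bdelta=\wh_*-\w_*$,
is the same computation that the paper performs in the proof of Theorem~\ref{thm:net} (and reuses for LASSO), except that you retain the data quadratic $\frac{1}{2n}\|\Xh\bdelta\|_2^2$, which the paper drops there; your use of Lemma~\ref{lem:2} through $\sigma\geq 2\|\q\|_\infty$, the cone condition $\|[\bdelta]_{\S^c}\|_1\leq 3\|[\bdelta]_{\S}\|_1$, and the resulting $\|\bdelta\|_1\leq\sqrt{16s}\,\|\bdelta\|_2$ all match the paper. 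The one organizational difference is which quadratic carries the lower bound: you keep the compressed quadratic and transfer it back to $X$ via Lemma~\ref{lem:3}, whereas the paper combines first-order optimality of $\wh_*$ with the exact expansion of the \emph{original} objective at $\w_*$, so that $\frac{1}{2n}\|X\bdelta\|_2^2$ appears directly and the JL mismatch is paid through the cross term $\bdelta^{\top}(\frac{1}{n}X^{\top}X - \frac{1}{n}\Xh^{\top}\Xh)\bdelta$; that cross term enters at full weight against the half-weighted quadratic, which is where the paper's factor $2$ in $\Lambda=\phi_{\min}(16s)-2\rho(16s)$ actually comes from.

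The genuine gap is the step you yourself flag as the main obstacle and then do not resolve: lower-bounding $\frac{1}{n}\bdelta^{\top}X^{\top}X\bdelta$ for the non-sparse cone vector $\bdelta$. You assert it is at least $(\phi_{\min}(16s)-\rho(16s))\|\bdelta\|_2^2$, attributing a ``further $\rho(16s)$'' cost to the sparse-to-cone reconciliation, but no argument is given and the attribution cannot be right: $\rho(16s)$ measures the JL perturbation $\frac{1}{n}|\bdelta^{\top}(X^{\top}X-\Xh^{\top}\Xh)\bdelta|$ over the cone and has nothing to do with extending Assumption~\ref{ass:rc} (defined only over exactly sparse vectors) to cone vectors; your ``$-2\rho$'' is reverse-engineered to match the theorem rather than derived. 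The paper fills precisely this hole with its appendix Lemma~4, which asserts $\frac{1}{n}\bdelta^{\top}X^{\top}X\bdelta\geq \phi_{\min}(16s)\|\bdelta\|_2^2$ whenever $\|\bdelta\|_1\leq\sqrt{16s}\,\|\bdelta\|_2$, with no $\rho$ correction at all. If you grant that lemma, your route closes and in fact yields the slightly sharper constant $\phi_{\min}(16s)-\rho(16s)\geq\Lambda$, so the stated bound follows; but as written, the key lower-bound step of your proof is missing, and the mechanism you propose for it would not supply it.
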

\vspace*{-0.1in}
{\bf Remark: } Note that $\lambda$  in Theorem~\ref{thm:net} is replaced by $\Lambda$ in Theorem~\ref{thm:lasso}. In order to make the result to be valid, we must have $\Lambda>0$, i.e., $m\geq \Omega(\kappa^2(16s)(\log(1/\delta) + 2s\log(36d/s)))$, where $\kappa(16s)=\frac{\phi_{\max}(16s)}{\phi_{\min}(16s)}$. In addition, if the conditions in Theorem~\ref{thm:lasso} hold, the result in Theorem~\ref{thm:net} can be made stronger by replacing $\lambda$ with $\lambda +\Lambda$. 

\section{Dantzig Selector under JL transforms}\label{sec:dant}
\vspace*{-0.1in}
In light of our geometric explanation of $\sigma$, we present the Dantzig selector under JL transforms and its theoretical guarantee. The original Dantzig selector is  the optimal solution to the following problem: 
\vspace*{-0.15in}
\begin{align}\label{eqn:dan}
\w^D_* = \min_{\w\in\R^d}\|\w\|_1, \quad\quad \text{s.t.}\quad\frac{1}{n}\|X^{\top}(X\w - \y)\|_\infty\leq \tau
\end{align}
Under JL transforms, we propose the following estimator
\begin{align}\label{eqn:danr}
\wh^D_* = \min_{\w\in\R^d}\|\w\|_1, \quad\quad \text{s.t.}\quad\frac{1}{n}\left\|\Xh^{\top}(\Xh\w - \yh)\right\|_\infty\leq \tau + \sigma
\end{align}
From previous analysis, we show that $\w_*^D$ satisfies the constraint in~(\ref{eqn:danr}) provided that $\sigma \geq \|\q\|_\infty$, which is the key to establish the following result. 
\begin{thm}[Optimization Error for Dantzig Selector]\label{thm:dan}  Assume $X$ satisfies the restricted eigen-value condition in \textbf{Assumption}~\ref{ass:rc}. Let $\sigma = \Theta\left(\frac{\eta R}{n}\sqrt{\frac{\log(d/\delta)}{m}}\right)\geq\frac{c\eta R}{n}\sqrt{\frac{\log(d/\delta)}{m}}$, where $c$ is an universal constant  in the JL lemma. Let $\w^D_*$ and $\wh^D_*$ be the optimal solutions to~(\ref{eqn:dan}) and~(\ref{eqn:danr}), respectively, and  $\Lambda = \phi_{\min}(4s) - \rho(4s)$. 
Assume $\Lambda>0$, then with a probability at least $1-\delta$, for $p=1$ or $2$ we have
\vspace*{-0.1in}
\begin{align*}
\|\wh^D_* - \w^D_*\|_p\leq O\left(\frac{\eta R}{n\Lambda}\sqrt{\frac{s^{2/p}\log(d/\delta)}{m}} + \frac{\tau s^{1/p}}{\Lambda}\right)
\end{align*}
\end{thm}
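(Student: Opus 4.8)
The plan is to adapt the classical restricted-eigenvalue analysis of the Dantzig selector (in the style of Bickel, Ritov \& Tsybakov) to the compressed constraint set, using Lemmas~\ref{lem:2} and~\ref{lem:3} to transfer the relevant geometry from $X$ to $\Xh$. Throughout I write $\mathbf{h} = \wh^D_* - \w^D_*$ and let $S$ denote the support of $\w^D_*$ with $|S| = s$, and I work on the event (of probability at least $1-\delta$, after a union bound that is absorbed into the constants) on which both lemmas hold.

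First I would establish \emph{feasibility of the uncompressed optimum in the compressed program}. Exactly as in the geometric interpretation preceding the statement, for any $\w$ satisfying the original constraint,
\[
\frac{1}{n}\|\Xh^{\top}(\Xh\w - \yh)\|_\infty \leq \frac{1}{n}\|X^{\top}(X\w - \y)\|_\infty + \frac{1}{n}\|X^{\top}(A^{\top}A - I)(X\w - \y)\|_\infty .
\]
Applying this to $\w = \w^D_*$, bounding the first term by $\tau$ (the original constraint) and the second by Lemma~\ref{lem:2} with $\e = X\w^D_* - \y$ (so that $\|\e\|_2\leq\eta$), the choice $\sigma \geq \frac{c\eta R}{n}\sqrt{\log(d/\delta)/m}\geq\|\q\|_\infty$ guarantees that $\w^D_*$ is feasible for~(\ref{eqn:danr}). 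Since $\wh^D_*$ minimizes the $\ell_1$ norm over this feasible set, $\|\wh^D_*\|_1\leq\|\w^D_*\|_1$, and the standard decomposition over $S$ yields the cone condition $\|\mathbf{h}_{S^c}\|_1\leq\|\mathbf{h}_S\|_1$, hence $\|\mathbf{h}\|_1\leq 2\|\mathbf{h}_S\|_1\leq 2\sqrt{s}\,\|\mathbf{h}\|_2=\sqrt{4s}\,\|\mathbf{h}\|_2$.

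Next I would produce matching upper and lower bounds on the compressed quadratic form $\frac{1}{n}\mathbf{h}^{\top}\Xh^{\top}\Xh\mathbf{h}$. For the upper bound, writing $\frac{1}{n}\Xh^{\top}\Xh\mathbf{h} = \frac{1}{n}\Xh^{\top}(\Xh\wh^D_* - \yh) - \frac{1}{n}\Xh^{\top}(\Xh\w^D_* - \yh)$ and using that both $\wh^D_*$ (optimal) and $\w^D_*$ (feasible, by the previous step) obey the compressed constraint gives $\frac{1}{n}\|\Xh^{\top}\Xh\mathbf{h}\|_\infty\leq 2(\tau+\sigma)$, so by H\"older together with the cone bound,
\[
\frac{1}{n}\mathbf{h}^{\top}\Xh^{\top}\Xh\mathbf{h}\leq \frac{1}{n}\|\Xh^{\top}\Xh\mathbf{h}\|_\infty\|\mathbf{h}\|_1\leq 4(\tau+\sigma)\sqrt{s}\,\|\mathbf{h}\|_2 .
\]
For the lower bound, since $\mathbf{h}/\|\mathbf{h}\|_2$ lies in the set $\{\|\w\|_2\leq 1,\ \|\w\|_1\leq\sqrt{4s}\}$, Lemma~\ref{lem:3} at sparsity level $4s$ gives $\frac{1}{n}|\mathbf{h}^{\top}(X^{\top}X - \Xh^{\top}\Xh)\mathbf{h}|\leq\rho(4s)\|\mathbf{h}\|_2^2$, so that $\frac{1}{n}\mathbf{h}^{\top}\Xh^{\top}\Xh\mathbf{h}\geq(\phi_{\min}(4s)-\rho(4s))\|\mathbf{h}\|_2^2=\Lambda\|\mathbf{h}\|_2^2$. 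Combining the two estimates and cancelling one factor of $\|\mathbf{h}\|_2$ yields $\|\mathbf{h}\|_2\leq 4(\tau+\sigma)\sqrt{s}/\Lambda$, and then $\|\mathbf{h}\|_1\leq\sqrt{4s}\,\|\mathbf{h}\|_2\leq 8(\tau+\sigma)s/\Lambda$. Substituting $\sigma=\Theta(\frac{\eta R}{n}\sqrt{\log(d/\delta)/m})$ and noting that $s^{2/p}$ equals $s$ for $p=2$ and $s^2$ for $p=1$ reproduces the claimed $\ell_p$ bound, with the $\tau s^{1/p}/\Lambda$ term arising from $\tau$ and the other term from $\sigma$.

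The main obstacle I anticipate is the lower-bound step: the error vector $\mathbf{h}$ is \emph{not} $4s$-sparse but only lies in the cone $\|\mathbf{h}\|_1\leq\sqrt{4s}\,\|\mathbf{h}\|_2$, whereas Assumption~\ref{ass:rc} defines $\phi_{\min}(4s)$ as a minimum over exactly $4s$-sparse vectors. Justifying $\frac{1}{n}\mathbf{h}^{\top}X^{\top}X\mathbf{h}\geq\phi_{\min}(4s)\|\mathbf{h}\|_2^2$ uniformly over this cone is the delicate point, and it is precisely why Lemma~\ref{lem:3} is phrased with the matching $\ell_1/\ell_2$-ball normalization rather than a hard sparsity constraint, so that $\rho(4s)$ controls the deviation over the whole cone and only the restricted eigenvalue of $X$ must be invoked directly. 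Keeping the sparsity level consistently at $4s$ across the cone bound, Lemma~\ref{lem:3}, and the definition of $\Lambda$, and confirming that $\Lambda>0$ follows from the stated lower bound on $m$, is where the careful bookkeeping lies.
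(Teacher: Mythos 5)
Your proposal is correct and follows essentially the same route as the paper's own proof: feasibility of $\w^D_*$ in the compressed program via Lemma~\ref{lem:2} and the geometric argument, the cone condition $\|[\wh^D_*-\w^D_*]_{\S_c}\|_1\leq\|[\wh^D_*-\w^D_*]_{\S}\|_1$ from $\ell_1$-optimality, the H\"older bound $\frac{1}{n}\|\Xh^{\top}\Xh(\wh^D_*-\w^D_*)\|_\infty\leq 2(\tau+\sigma)$, and the lower bound $\Lambda=\phi_{\min}(4s)-\rho(4s)$ via Lemma~\ref{lem:3}, yielding the same $\ell_1$ and $\ell_2$ estimates up to constants. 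The step you flag as delicate---extending the restricted-eigenvalue lower bound from $4s$-sparse vectors to the cone---is exactly what the paper's appendix Lemma~5 asserts (with comparable brevity), so your treatment matches the paper's level of rigor on that point.
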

\vspace*{-0.1in}
{\bf Remark:} Compared to the result in Theorem~\ref{thm:lasso}, the definition of $\Lambda$ is slightly different, and there is an additional term of $\frac{\tau s^{1/p}}{\Lambda}$. This additional term seems unavoidable since $\eta=0$ doest not necessarily indicate $\w_*^D$ is also the optimal solution to~(\ref{eqn:danr}). However, this should not be a concern  if we consider the oracle inequality of $\wh_*^D$ via the oracle inequality of $\w_*^D$, which is $\|\w_*^D - \u_*\|_{p}\leq O\left(\frac{\tau s^{1/p}}{\phi_{\min}(4s)}\right)$  under  the Gaussian noise assumption and  $\tau = \Theta\left(\sqrt{\frac{\log d}{n}}\right)$.

\begin{figure}[t]
\center
 \includegraphics[width=0.3\textwidth]{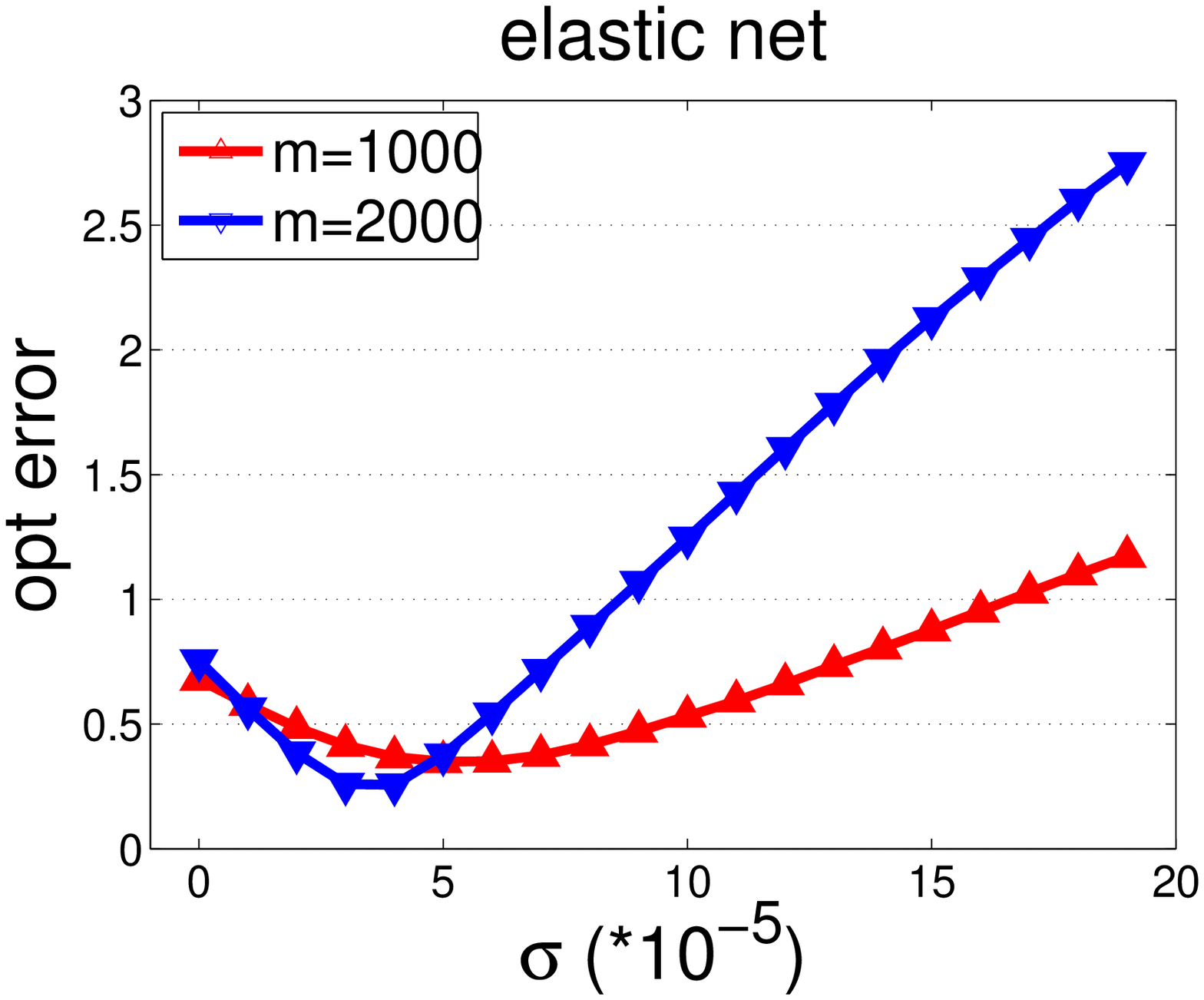}\hspace*{-0.1in}
 \includegraphics[width=0.3\textwidth]{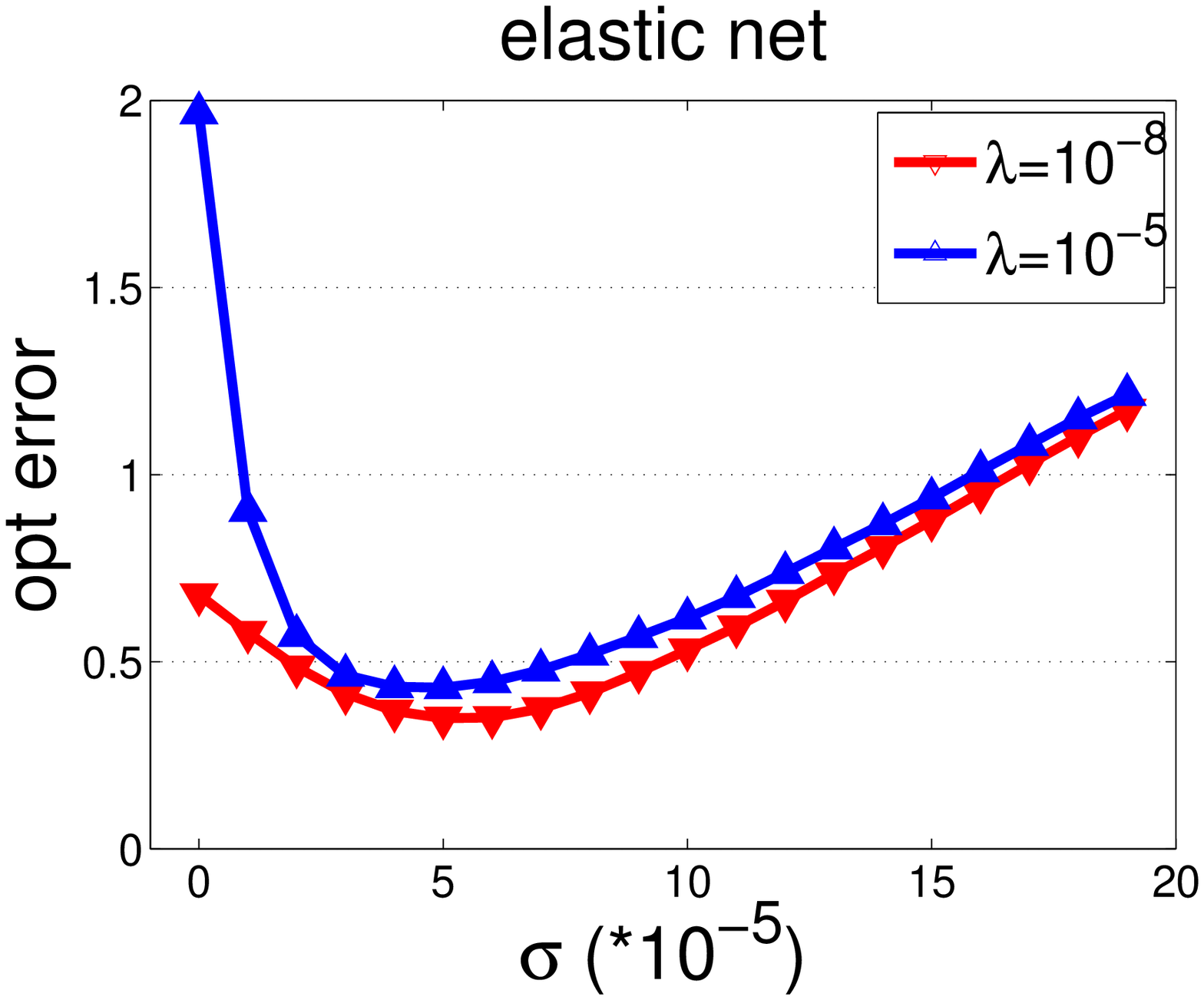}\hspace*{-0.1in}
 \includegraphics[width=0.3\textwidth]{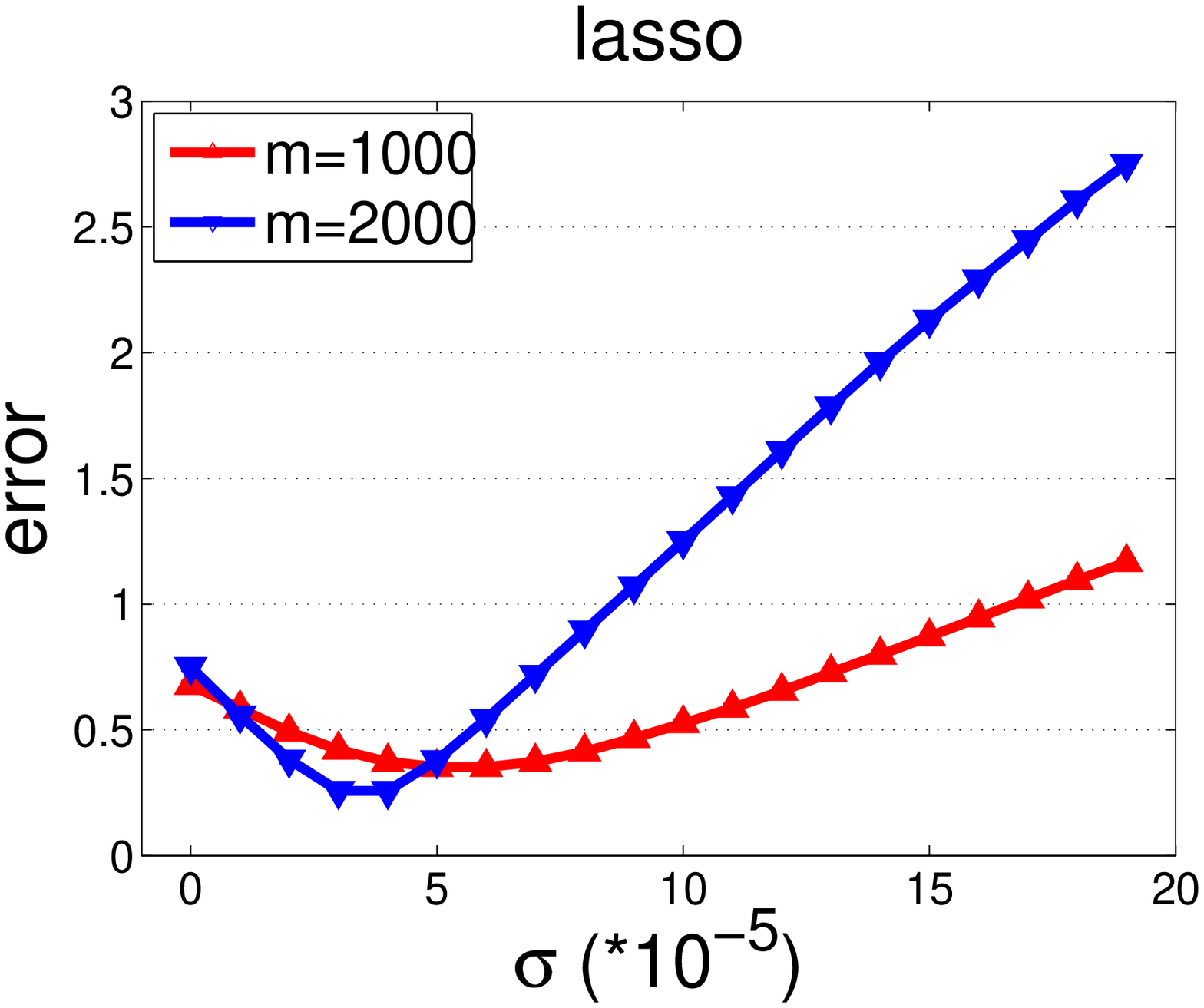}\hspace*{-0.1in}
 \vspace*{-0.1in}
\caption{Optimization error of elastic net and lasso under different settings on the synthetic data.}
\label{fig:syn}
\center
 \includegraphics[width=0.3\textwidth]{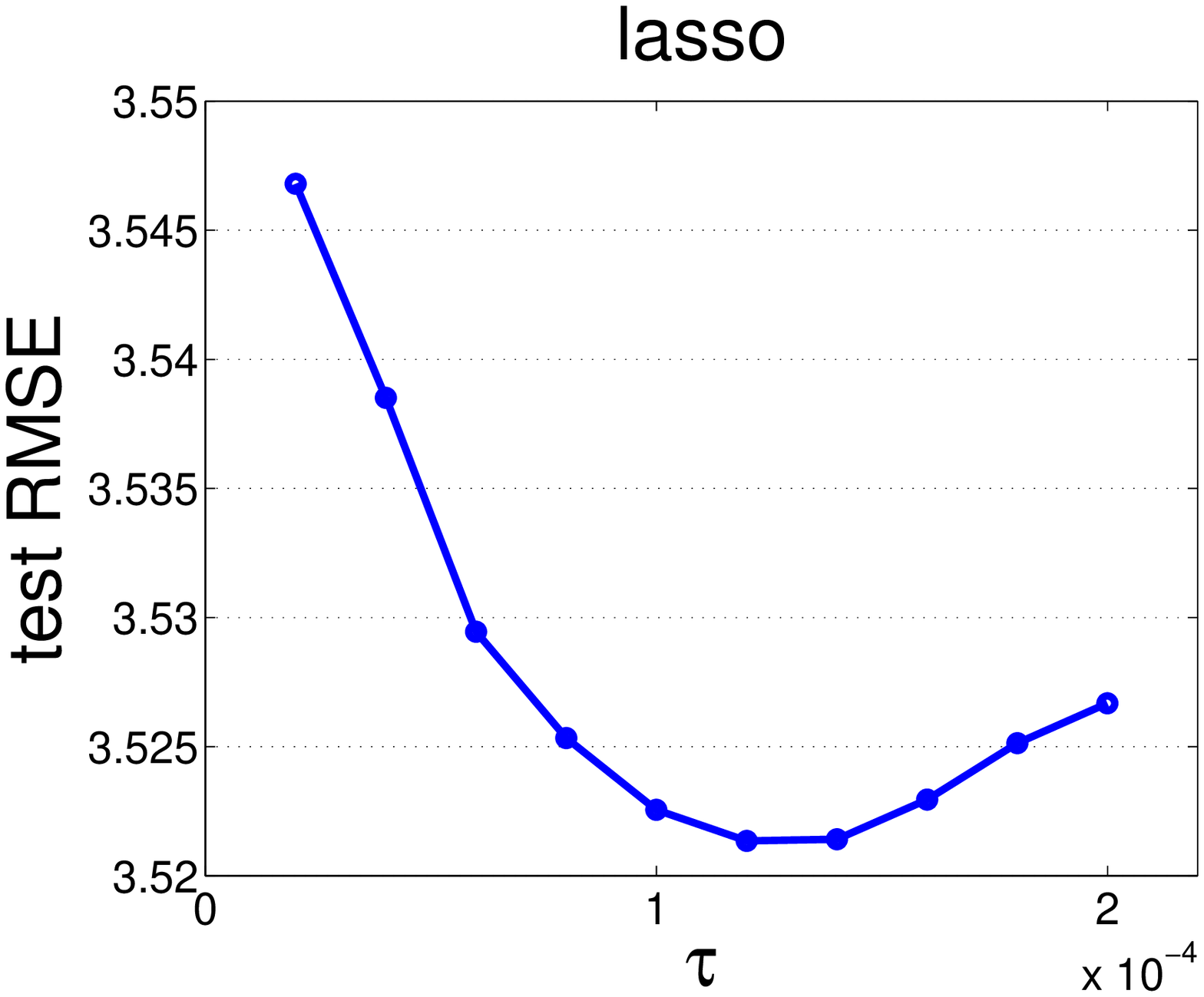}\hspace*{-0.1in}
 \includegraphics[width=0.3\textwidth]{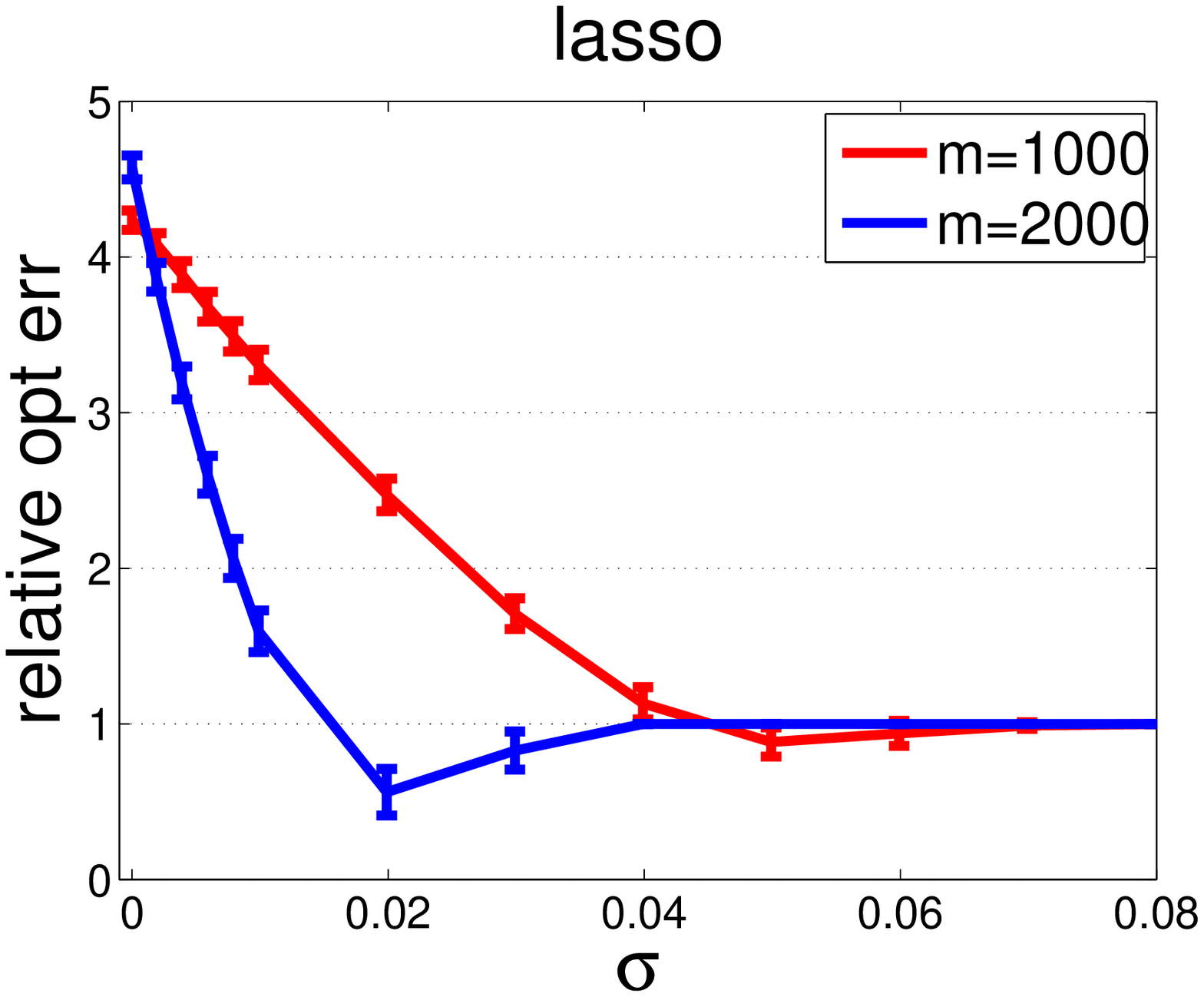}\hspace*{-0.1in}
 \includegraphics[width=0.3\textwidth]{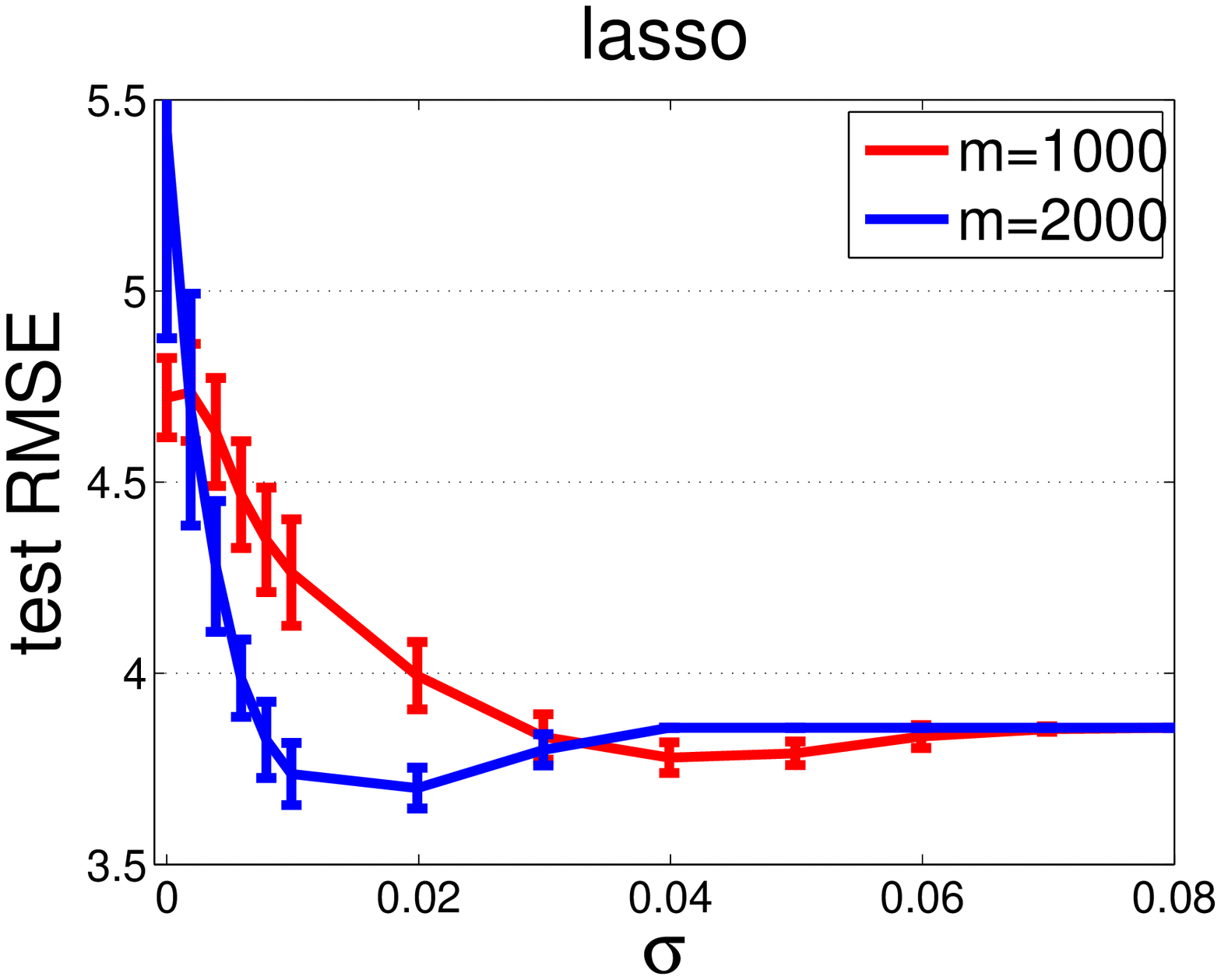}\hspace*{-0.1in}
 \vspace*{-0.1in}
\caption{Optimization or Regression error of lasso under different settings on the E2006-tfidf.}
\label{fig:E2006}
\vspace*{-0.2in}
\end{figure}

\section{Numerical Experiments}\label{sec:exp}
\vspace*{-0.1in}
In this section, we present some numerical experiments to complement  the theoretical results. We conduct experiments on two datasets, a synthetic dataset and a real dataset. The synthetic data is generated similar to previous studies on sparse signal recovery~\cite{DBLP:journals/siamjo/Xiao013}.  
In particular, we generate a random matrix $X\in\R^{n\times d}$ with $n = 10^4$ and $d = 10^5$. The entries of the matrix $X$ are generated independently with the uniform distribution over the interval $[-1,+1]$. A sparse vector $\u_* \in \R^d$ is generated with the same distribution at $100$ randomly chosen coordinates. The noise $\xi\in\R^n$ is a dense vector with independent random entries with the uniform distribution over the interval $[-\sigma, \sigma]$, where $\sigma$ is the noise magnitude and is set to $0.1$. We scale the data matrix $X$ such that all entries have a variance of $1/n$ and scale the noise vector $\xi$  accordingly.  Finally the vector $\y$ was obtained as $\y = X\u_* + \xi$. For elastic net on the synthetic data, we try two different values of $\lambda$, $10^{-8}$ and $10^{-5}$. The value of $\tau$ is set to $10^{-5}$ for both elastic net and lasso. Note that these values are not intended to optimize the performance of elastic net and lasso on the synthetic data. The real data used in the experiment is E2006-tfidf dataset. We use the version available on libsvm website~\footnote{\url{http://www.csie.ntu.edu.tw/~cjlin/libsvmtools/datasets/}}. There are a total of $n=16,087$ training instances and $d=150,360$ features and $3308$ testing instances.  We normalize the training data such that each dimension has mean zero and variance $1/n$. The testing data is normalized using the statistics computed on the training data. For JL transform, we use the random hashing.  

The experimental results on the synthetic data under different settings are shown in Figure~\ref{fig:syn}. In the left plot, we compare the optimization error for elastic net with  $\lambda=10^{-8}$ and  two different values of $m$, i.e., $m=1000$ and $m=2000$. The horizontal axis is the value of $\sigma$, the added regularization parameter. We can observe that adding a slightly larger additional $\ell_1$ norm to the compressed data problem indeed reduces the optimization error. When the value of $\sigma$ is larger than some threshold, the error will increase, which is consistent with our theoretical results. In particular, we can see that the threshold value for $m=2000$ is smaller than that for $m=1000$. In the middle plot, we compare the optimization error for elastic net with $m=1000$ and two different values of the regularization parameter $\lambda$. Similar trends of the optimization error versus $\sigma$ are also observed. In addition, it is interesting to see that the optimization error for $\lambda=10^{-8}$ is less than that for $\lambda=10^{-5}$, which seems to contradict to the theoretical results at the first glance due to the explicit inverse dependence on $\lambda$. However, the optimization error also depends on $\|\e\|_2$, which measures the empirical error of the corresponding optimal model. We find that with $\lambda=10^{-8}$ we have a smaller $\|\e\|_2=0.95$ compared to $1.34$ with $\lambda=10^{-5}$, which explains the result in the middle plot. For the right plot, we repeat the same experiments for lasso as in the left plot for elastic net, and observe  similar results. 

The experimental results on E2006-tfidf dataset for lasso are shown in Figure~\ref{fig:E2006}. In the left plot, we show the root mean square error (RMSE) on the testing data of different models learned from the original data with different values of $\tau$. In the middle and right plots, we fix the value of $\tau=10^{-4}$ and increase the value of $\sigma$ and plot the relative optimization error and the RMSE on the testing data. Again, the empirical results are consistent with the theoretical results and verify  that with JL transforms a larger $\ell_1$ regularizer yields a better performance.

\section{Conclusions}
\vspace*{-0.1in}
In this paper, we have considered a fast approximation method for sparse least-squares regression by exploiting the JL transform. We propose a slightly different formulation on the compressed data and interpret it from a geometric viewpoint. We also establish the theoretical guarantees on the optimization error of the obtained solution for elastic net,  lasso and Dantzig selector on the compressed data. The theoretical results are also validated by numerical experiments on a synthetic dataset and a real dataset.

{
\bibliographystyle{abbrv}
\bibliography{all}}

\appendix
\section{Proofs of main theorems}\label{sec:ana}
\subsection{Proof of Theorem 2}
Recall  the definitions
\begin{align}\label{eqn:q}
\displaystyle\q  = \frac{1}{n}X^{\top}(A^{\top}A -I )\e, \quad\quad \e=X\w_* - \y
\end{align}
First, we note that 
\begin{align*}
\wh_* &= \arg\min_{\w\in\R^d}\frac{1}{2n}\|\Xh\w - \yh\|_2^2 + \frac{\lambda}{2}\|\w\|_2^2 + (\tau + \sigma) \|\w\|_1\\
&=\arg\min_{\w\in\R^d}\underbrace{\frac{1}{2n}\left(\w^{\top}\Xh^{\top}\Xh\w - 2\w^{\top}\Xh^{\top}\y \right)+ \frac{\lambda}{2}\|\w\|_2^2 + (\tau + \sigma) \|\w\|_1}\limits_{F(\w)}
\end{align*}
and 
\begin{align*}
\w_* = \arg\min_{\w\in\R^d}\frac{1}{2n}\|X\w - \y\|_2^2 + \frac{\lambda}{2}\|\w\|_2^2 + \tau \|\w\|_1
\end{align*}
By optimality of $\wh_*$ and the strong convexity of $F(\w)$, for any $g\in\partial\|\w_*\|_1$ we have
\begin{align}\label{eqn:fobj}
0\geq F(\wh_*) - F(\w_*)\geq& (\wh_* - \w_*)^{\top}\left(\frac{1}{n}\Xh^{\top}\Xh\w_*  - \frac{1}{n}\Xh^{\top}\yh + \lambda \w_*\right)  + (\tau + \sigma)(\wh_* - \w_*)^{\top}g \notag\\
&+ \frac{\lambda}{2}\|\wh_* - \w_*\|_2^2
\end{align}
By the optimality condition of  $\w_*$, there exists $h\in\partial \|\w_*\|_1$ such that  
\begin{align}\label{eqn:opt}
\frac{1}{n}X^{\top}X\w_* -\frac{1}{n}X^{\top} \y+ \lambda \w_* + \tau h=0 
\end{align}
By utilizing the above equation in~(\ref{eqn:fobj}), we have
\begin{align}\label{eqn:key}
0\geq&   (\wh_* - \w_*)^{\top}\q + (\wh_* - \w_*)^{\top}\left[(\tau + \sigma)g - \tau h\right] + \frac{\lambda}{2}\|\wh_* - \w_*\|_2^2
\end{align}
Let $\S$ denote the support set of $\w_*$ and $\S_c$ denote its complement set.  Since $g$ could be any sub-gradient of $\|\w\|_1$ at $\w_*$, we define $g$ as $g_i = \left\{\begin{array}{lc} h_i,& i\in\S\\sign(\widehat w_{*i}),&i\in\S_c\end{array}\right.$.  Then we have 
\begin{align*}
(\wh_* - \w_*)^{\top}&\left[(\tau + \sigma)g - \tau h\right] = \sum_{i\in\S}(\widehat w_{*i} - w_{*i} )(\sigma h_i) + \sum_{i\in\S^c}(\widehat w_{*i} - w_{*i} )(\sigma sign(\widehat w_{*i}) + \tau(sign(\widehat w_{*i}) - h_i))\\
&\geq -\sigma\|[\wh_* - \w_*]_{\S}\|_1 + \sum_{i\in\S_c}\sigma sign(\widehat w_{*i})\widehat w_{*i}  + \sum_{i\in\S_c}\tau(sign(\widehat w_{*i}) - h_i)\widehat w_{*i}\\
&\geq -\sigma\|[\wh_* - \w_*]_{\S}\|_1  + \sigma \|[\wh_*]_{\S_c}\|_1
\end{align*}
where the last inequality uses $|h_i|\leq 1$ and $\sum_{i\in\S_c}(sign(\widehat w_{*i}) - h_i)\widehat w_{*i}\geq 0$. 
Combining the above inequality with~(\ref{eqn:key}), we have
\begin{align*}
0\geq&  -\|\wh_* - \w_*\|_1\|\q\|_\infty- \sigma\|[\wh_* - \w_*]_\S\|_1  +\sigma\|[\wh_*]_{\S_c}\|_1 + \frac{\lambda}{2}\|\wh_* - \w_*\|_2^2
\end{align*}
By splitting $\|\wh_* - \w_*\|_1 = \|[\wh_* - \w_*]_\S\|_1 + \|[\wh_* - \w_*]_{\S_c}\|_1$ and reorganizing the above inequality we have
\begin{align*}
\frac{\lambda}{2}\|\wh_* - \w_*\|_2^2 + (\sigma - \|\q\|_\infty)\|[\wh_*]_{\S^c}\|_1\leq (\sigma + \|\q\|_\infty)\|[\wh_* - \w_*]_\S\|_1
\end{align*}
If $\sigma\geq 2\|\q\|_\infty$, then we have
\begin{align}
\frac{\lambda }{2}\|\wh_* - \w_*\|_2^2&\leq \frac{3\sigma}{2}\|[\wh_* - \w_*]_\S\|_1\label{eqn:b1}\\
\|[\wh_*]_{\S^c}\|_1&\leq 3\|[\wh_* - \w_*]_\S\|_1\label{eqn:b2}
\end{align}
Note that the inequality~(\ref{eqn:b2}) hold regardless the value of $\lambda$. 
Since
\begin{align*}
\|[\wh_*-\w_*]_\S\|_1\leq \sqrt{s}\|[\wh_*-\w_*]_\S\|_2, \text{ and  }\|\wh_* - \w_*\|_2\geq \max(\|[\wh_*-\w_*]_\S\|_2,  \|[\wh_*]_{\S^c}\|_2),
\end{align*}
by combining the above inequalities with~(\ref{eqn:b1}), we can get
\begin{align*}
 \|\wh_* - \w_*\|_2\leq \frac{3\sigma}{\lambda}\sqrt{s},  \quad  \|[\wh_* - \w_*]_\S\|_1\leq \frac{3\sigma}{\lambda}s
\end{align*}
and
\begin{align*}
\|\wh_* - \w_*\|_1\leq  \|[\wh_*]_{\S^c}\|_1 + \|[\wh_* - \w_*]_\S\|_1\leq3\|[\wh_* - \w_*]_\S\|_1 +\|[\wh_* - \w_*]_\S\|_1\leq  \frac{12\sigma}{\lambda }s
\end{align*} 
We can then complete the proof of Theorem~2 by noting the upper bound of $\|\q\|_\infty$  in Lemma 2 and by setting $\sigma$ according to the Theorem. 

\subsection{Proof of Theorem 3}
When $\lambda=0$, the reduced problem becomes 
\begin{align}
\wh_* &=\arg\min_{\w\in\R^d}\underbrace{\frac{1}{2n}\|\Xh\w - \yh\|_2^2  +(\tau+\sigma)\|\w\|_1}\limits_{F(\w)}
\end{align}
From the proof of Theorem 2, we have 
\begin{align*}
\|[\wh_*]_{\S^c}\|_1&\leq 3\|[\wh_* - \w_*]_\S\|_1, \quad \text{and}\quad \frac{\|\wh_* - \w_*\|_1}{\|\wh_* - \w_*\|_2} = \frac{4\|[\wh_* - \w_*]_{\S}\|_1}{\|\wh_* - \w_*\|_2} \leq 4\sqrt{s}
\end{align*}

\setcounter{lemma}{3}
Then we can have the following lemma, whose proof of the lemma is deferred to next section.  
\begin{lemma}
If $X$ satisfies the restricted eigen-value condition at sparsity level $16s$, then 
\begin{align*}
\phi_{\min}(16s)\|\wh_* - \w_*\|^2_2\leq (\wh_*-\w_*)^{\top}X^{\top}X(\wh_* - \w_*)\leq 4\phi_{\max}(16s)\|\wh_* - \w_*\|^2_2
\end{align*}
\end{lemma}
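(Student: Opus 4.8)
The plan is to bound the (normalized) quadratic form $\tfrac1n\|X\v\|_2^2$ for $\v:=\wh_*-\w_*$---the quantity controlled by Assumption~\ref{ass:rc}---by reducing it to the restricted eigen-value constants $\phi_{\min}(16s),\phi_{\max}(16s)$ through a sparse ``shelling'' decomposition of $\v$, the standard device for passing from genuinely sparse vectors to the cone in which $\v$ lives. I would start from the two structural facts already recorded in the proof of Theorem~\ref{thm:lasso}: since $\w_*$ is supported on $\S$ we have $[\v]_{\S^c}=[\wh_*]_{\S^c}$, so the cone condition reads $\|[\v]_{\S^c}\|_1\leq 3\|[\v]_\S\|_1$, and consequently $\|\v\|_1\leq 4\sqrt s\,\|\v\|_2$ with $|\S|=s$.

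First I would decompose the coordinates. Let $\S_0:=\S$ (size $s$), and split $\S^c$ into successive blocks $\S_1,\S_2,\dots$ of size $15s$ each (the last possibly smaller), ordered so that the magnitudes of the entries of $\v$ are non-increasing across blocks. Writing $\v_{01}:=[\v]_{\S_0\cup\S_1}$ and $\v_j:=[\v]_{\S_j}$, every one of these pieces is supported on at most $16s$ coordinates, so Assumption~\ref{ass:rc} squeezes $\tfrac{1}{\sqrt n}\|Xz\|_2$ between $\sqrt{\phi_{\min}(16s)}\,\|z\|_2$ and $\sqrt{\phi_{\max}(16s)}\,\|z\|_2$ for each such piece $z$.

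The key estimate is the sorted-tail bound: because the blocks are ordered by magnitude and have size $15s$, each entry of $\v_j$ with $j\geq2$ is at most the average entry of $\v_{j-1}$, so $\|\v_j\|_2\leq\|\v_{j-1}\|_1/\sqrt{15s}$ and hence $\sum_{j\geq2}\|\v_j\|_2\leq\tfrac{1}{\sqrt{15s}}\|[\v]_{\S^c}\|_1\leq\tfrac{3}{\sqrt{15s}}\|[\v]_\S\|_1\leq\tfrac{3}{\sqrt{15}}\|[\v]_\S\|_2$, using the cone condition and $\|[\v]_\S\|_1\leq\sqrt s\,\|[\v]_\S\|_2$. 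Writing $X\v=X\v_{01}+\sum_{j\geq2}X\v_j$, the upper bound then follows from the triangle inequality, the block-wise $\phi_{\max}(16s)$ estimate, and $\|\v_{01}\|_2\leq\|\v\|_2$; since the tail factor $3/\sqrt{15}<1$, the accumulated constant stays below the stated $4\phi_{\max}(16s)$. For the lower bound I would instead use the reverse triangle inequality $\tfrac{1}{\sqrt n}\|X\v\|_2\geq\tfrac{1}{\sqrt n}\|X\v_{01}\|_2-\tfrac{1}{\sqrt n}\sum_{j\geq2}\|X\v_j\|_2$, bounding the leading term below by $\sqrt{\phi_{\min}(16s)}\,\|\v_{01}\|_2$ and the tail above by $\sqrt{\phi_{\max}(16s)}\,\tfrac{3}{\sqrt{15}}\|[\v]_\S\|_2$, and finally transferring $\|\v_{01}\|_2$ back to the full $\|\v\|_2$ via $\|\v\|_2^2=\|\v_{01}\|_2^2+\sum_{j\geq2}\|\v_j\|_2^2$ together with the same tail bound.

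The main obstacle is exactly this lower bound: the shelling leaves a \emph{subtracted} tail term, so the clean leading constant $\phi_{\min}(16s)$ only survives once the block size $15s$ (which is precisely what pins the sparsity level to $16s$) and the cone constant $3$ are chosen so that the tail contribution $\tfrac{3}{\sqrt{15}}\sqrt{\phi_{\max}(16s)}$ is controlled relative to the leading $\sqrt{\phi_{\min}(16s)}$ term. This delicate bookkeeping---turning an estimate phrased in terms of the support block $\|[\v]_\S\|_2$ into one in the full $\|\v\|_2$---is the crux of the deferred argument, and it sits naturally in the regime $m\geq\Omega(\kappa^2(16s)(\log(1/\delta)+2s\log(36d/s)))$ that Theorem~\ref{thm:lasso} already requires for $\Lambda>0$.
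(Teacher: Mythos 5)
Your upper bound is correct, and it takes a genuinely different route from the paper's. The paper normalizes $\v=\wh_*-\w_*$, notes $\|\v\|_1\le 4\sqrt{s}\,\|\v\|_2=\sqrt{16s}\,\|\v\|_2$ so that $\v/\|\v\|_2\in\mathcal{K}_{d,16s}$, and then invokes the inclusion $\mathcal{K}_{d,16s}\subseteq 2\,\mathrm{conv}(\mathcal{S}_{d,16s})$ together with Jensen's inequality for the convex quadratic $f(\u)=\frac{1}{n}\u^{\top}X^{\top}X\u$, which yields the factor $4\phi_{\max}(16s)$ in one line. Your shelling argument reaches the same conclusion with the slightly better constant $(1+3/\sqrt{15})^2\phi_{\max}(16s)\approx 3.15\,\phi_{\max}(16s)<4\phi_{\max}(16s)$, and all the steps (the sorted-tail bound $\|\v_j\|_2\le\|\v_{j-1}\|_1/\sqrt{15s}$, the cone condition, the triangle inequality) are sound.

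The genuine gap is the lower bound, and it is not a bookkeeping issue that can be tuned away. Your shelling gives $\frac{1}{\sqrt n}\|X\v\|_2\ \ge\ \sqrt{\phi_{\min}(16s)}\,\|\v_{01}\|_2-\frac{3}{\sqrt{15}}\sqrt{\phi_{\max}(16s)}\,\|[\v]_{\S}\|_2$, and the subtracted term carries $\sqrt{\phi_{\max}}$, not $\sqrt{\phi_{\min}}$: since in the worst case $\|[\v]_\S\|_2$ is comparable to $\|\v_{01}\|_2$, the right-hand side can be negative as soon as $\kappa(16s)=\phi_{\max}(16s)/\phi_{\min}(16s)\ge 15/9$, and even in the ideal case $\kappa(16s)=1$ the resulting constant is roughly $\bigl(\sqrt{5/8}-3/\sqrt{15}\bigr)^2\phi_{\min}(16s)$, far below the claimed clean $\phi_{\min}(16s)$. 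Moreover, your appeal to the regime $m\ge\Omega(\kappa^2(16s)(\log(1/\delta)+2s\log(36d/s)))$ is a non sequitur: that condition controls the random perturbation $\rho(16s)$ introduced by the JL transform, whereas your tail term $\frac{3}{\sqrt{15}}\sqrt{\phi_{\max}(16s)}$ is a deterministic property of $X$ alone and does not shrink as $m$ grows.

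In fact, no argument that uses only the cone property of $\v$ (which is all that either your proof or the paper's uses about $\wh_*-\w_*$) can establish the stated left inequality, because sparse-level restricted eigenvalues do not transfer to cone vectors with the same constant. For instance, with $d=3$ and sparsity level $2$, take $\u_0=(\frac{\sqrt2}{6},\frac{\sqrt2}{6},\frac{2\sqrt2}{3})$, which is a unit vector with $\|\u_0\|_1=\sqrt2$, hence in $\mathcal{K}_{3,2}$, and let $\frac{1}{n}X^{\top}X=I-(1-\epsilon_0)\u_0\u_0^{\top}$: then $\phi_{\min}(2)\ge 1/18$ while $\frac{1}{n}\u_0^{\top}X^{\top}X\u_0=\epsilon_0$ can be made arbitrarily small. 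For what it is worth, the paper does not overcome this obstacle either: its one-line justification of the left inequality ("follows from the restricted eigen-value condition and $\mathrm{conv}(\mathcal{S}_{d,s})\subseteq\mathcal{K}_{d,s}$") runs the inclusion in a direction that cannot help --- membership in $\mathcal{K}_{d,16s}$ does not place $\v/\|\v\|_2$ in $\mathrm{conv}(\mathcal{S}_{d,16s})$, and convexity only provides upper bounds over convex hulls in any case. So your instinct that this transfer is the crux of the lemma is exactly right; it is a weakness of the lemma as stated, not merely of your proof, but as a result your proposal (like the paper's argument) does not prove the left inequality.
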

Then we proceed our proof as follows. Since $\w_*$ optimizes the original problem, we have for any $g\in\partial \|\wh_*\|_1$
\begin{align*}
0\geq (\w_* - \wh_*)^{\top}\left(\frac{1}{n}X^{\top}X\wh_* - \frac{1}{n}X^{\top}\y \right) + \tau (\w_* - \wh_*)^{\top}g + \frac{1}{2n}(\w_* - \wh_*)^{\top}X^{\top}X(\w_* - \wh_*)
\end{align*}
Since $\wh_*$ optimizes $F(\w)$, there exists $h\in\partial \|\wh_*\|_1$, we have
\begin{align*}
0\geq (\wh_* - \w_*)^{\top}\left(\frac{1}{n}\Xh^{\top}\Xh\wh_* - \frac{1}{n}\Xh^{\top}\yh\right) + (\tau + \sigma)(\wh_* - \w_*)^{\top}h 
\end{align*}
Combining the two inequalities above we have
\begin{align*}
0\geq& (\w_* - \wh_*)^{\top}\left(\frac{1}{n}X^{\top}X\wh_* - \frac{1}{n}X^{\top}\y - \frac{1}{n}\Xh^{\top}\Xh\wh_* + \frac{1}{n}\Xh^{\top}\yh\right) + (\wh_* - \w_*)^{\top}(\tau h + \sigma h - \tau g)\\
& + \frac{1}{2n}(\w_* - \wh_*)^{\top}X^{\top}X(\w_* - \wh_*)\\
&=(\w_* - \wh_*)^{\top}\left(\frac{1}{n}X^{\top}X\w_* - \frac{1}{n}X^{\top}\y - \frac{1}{n}\Xh^{\top}\Xh\w_* + \frac{1}{n}\Xh^{\top}\yh\right) + (\wh_* - \w_*)^{\top}(\tau h + \sigma h - \tau g)\\
& + \frac{1}{2n}(\w_* - \wh_*)^{\top}X^{\top}X(\w_* - \wh_*) + (\w_* - \wh_*)^{\top}\left(\frac{1}{n}X^{\top}X(\wh_* - \w_*)   - \frac{1}{n}\Xh^{\top}\Xh(\wh_*-\w_*)  \right)\\
&=(\w_* - \wh_*)^{\top}\left(\frac{1}{n}X^{\top}X\w_* - \frac{1}{n}X^{\top}\y - \frac{1}{n}\Xh^{\top}\Xh\w_* + \frac{1}{n}\Xh^{\top}\yh\right) + (\wh_* - \w_*)^{\top}(\tau h + \sigma h - \tau g)\\
& + \frac{1}{2n}(\w_* - \wh_*)^{\top}X^{\top}X(\w_* - \wh_*) + (\w_* - \wh_*)^{\top}\left(\frac{1}{n}X^{\top}X - \frac{1}{n}\Xh^{\top}\Xh \right)(\wh_* - \w_*)  
\end{align*}
By setting  $g_i= h_i, i\in \S$ and following the same analysis as in the Proof of Theorem~2, we have
\begin{align*}
(\wh_* - \w_*)^{\top}(\tau h + \sigma h - \tau g) \geq - \sigma\|[\wh_* - \w_*]_\S\|_1+ \sigma\|[\wh_*]_{\S^c}\|_1
\end{align*}
As a result, 
\begin{align*}
0\geq - \|\wh_* - \w_*\|_1\|\q\|_\infty - \sigma\|[\wh_* - \w_*]_\S\|_1 + \sigma\|[\wh_*]_{\S^c}\|_1 + \frac{\phi_{\min}(16s)}{2}\|\wh_* - \w_*\|_2^2 - \rho(16s)\|\wh_* - \w_*\|_2^2
\end{align*}
Then if $\sigma\geq 2\|\q\|_\infty$, we arrive at the same conclusion with $\lambda$ replaced by $\phi_{\min}(16s)-2\rho(16s)$ assuming $\phi_{\min}(16s)\geq 2\rho(16s)$.

\subsection{Proof of Theorem 4}
Let $\bdelta = \wh_* - \w_*$. First we show that 
\[
\|[\bdelta]_{\S_c}\|_1\leq \|[\bdelta]_\S\|_1
\]
This is because 
\begin{align*}
\|\w_*\|_1 - \|[\bdelta]_\S\| + \|[\bdelta]_{\S_c}\|_1  \leq \|\w_* + \delta \|_1=\|\wh_*\|_1\leq \|\w_*\|_1 
\end{align*}
Therefore  $\|[\bdelta]_{\S_c}\|_1\leq \|[\bdelta]_\S\|_1$, and we have 
\begin{align*}
\|[\wh_*]_{\S_c}\|_1&\leq \|[\wh_* - \w_*]_\S\|_1, \quad \text{and}\quad \frac{\|\wh_* - \w_*\|_1}{\|\wh_* - \w_*\|_2} = \frac{2\|[\wh_* - \w_*]_{\S}\|_1}{\|\wh_* - \w_*\|_2} \leq 2\sqrt{s}
\end{align*}
Similarly, we have the following lemma. 
\begin{lemma}
If $X$ satisfies the restricted eigen-value condition at sparsity level $4s$, then 
\begin{align*}
\phi_{\min}(4s)\|\wh_* - \w_*\|^2_2\leq \frac{1}{n}(\wh_*-\w_*)^{\top}X^{\top}X(\wh_* - \w_*)\leq 4\phi_{\max}(4s)\|\wh_* - \w_*\|^2_2
\end{align*}
\end{lemma}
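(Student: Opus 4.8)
The plan is to handle $\bdelta = \wh_* - \w_*$, which is not sparse but obeys the cone condition derived immediately above the lemma, $\|[\bdelta]_{\S_c}\|_1 \le \|[\bdelta]_\S\|_1$ (hence $\|\bdelta\|_1 \le 2\sqrt{s}\,\|\bdelta\|_2$), and to reduce everything to sparse vectors via the standard Cand\`es--Tao ``shelling'' decomposition so that Assumption~\ref{ass:rc} can be invoked blockwise. I would set $T_0 = \S$ (so $|T_0| = s$), order the coordinates of $\bdelta$ on $\S_c$ by decreasing magnitude, partition $\S_c$ into consecutive blocks $T_1, T_2, \ldots$ of size $3s$, and write $T_{01} = T_0 \cup T_1$, which has $|T_{01}| = 4s$. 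Then $\bdelta = \bdelta_{T_{01}} + \sum_{j\ge 2}\bdelta_{T_j}$, with each summand either $4s$- or $3s$-sparse, so that the restricted eigenvalues $\phi_{\min},\phi_{\max}$ apply to every piece (using their monotonicity, $\phi_{\max}(3s)\le\phi_{\max}(4s)$ and $\phi_{\min}(4s)\le\phi_{\min}(3s)$). The one estimate I would reuse throughout is the tail bound coming from the ordering: each entry of $\bdelta_{T_j}$ ($j\ge 2$) is at most the average magnitude on $T_{j-1}$, so $\|\bdelta_{T_j}\|_2 \le \|\bdelta_{T_{j-1}}\|_1/\sqrt{3s}$, and summing and applying the cone condition gives $\sum_{j\ge 2}\|\bdelta_{T_j}\|_2 \le \|[\bdelta]_{\S_c}\|_1/\sqrt{3s} \le \|[\bdelta]_\S\|_1/\sqrt{3s} \le \|\bdelta\|_2/\sqrt{3}$.

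\textbf{Upper bound.} This is the routine direction. Applying the triangle inequality to $\tfrac{1}{\sqrt n}\|X\bdelta\|_2$ and bounding each block through its restricted eigenvalue, $\tfrac{1}{\sqrt n}\|X\bdelta\|_2 \le \sqrt{\phi_{\max}(4s)}\,\|\bdelta_{T_{01}}\|_2 + \sqrt{\phi_{\max}(3s)}\sum_{j\ge 2}\|\bdelta_{T_j}\|_2$. Using $\|\bdelta_{T_{01}}\|_2 \le \|\bdelta\|_2$, the tail bound above, and $\phi_{\max}(3s)\le\phi_{\max}(4s)$, the right side is at most $(1+1/\sqrt 3)\sqrt{\phi_{\max}(4s)}\,\|\bdelta\|_2 \le 2\sqrt{\phi_{\max}(4s)}\,\|\bdelta\|_2$; squaring yields $\tfrac1n\bdelta^\top X^\top X\bdelta \le 4\phi_{\max}(4s)\|\bdelta\|_2^2$, exactly the stated bound (the constant $4$ being a loose rounding of $(1+1/\sqrt 3)^2$).

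\textbf{Lower bound (the main obstacle).} The natural route is the reverse triangle inequality, $\tfrac{1}{\sqrt n}\|X\bdelta\|_2 \ge \sqrt{\phi_{\min}(4s)}\,\|\bdelta_{T_{01}}\|_2 - \sqrt{\phi_{\max}(3s)}\sum_{j\ge 2}\|\bdelta_{T_j}\|_2$, combined with $\|\bdelta_{T_{01}}\|_2^2 = \|\bdelta\|_2^2 - \sum_{j\ge 2}\|\bdelta_{T_j}\|_2^2 \ge (1-\tfrac13)\|\bdelta\|_2^2$ from the tail bound. The difficulty I anticipate is that this produces an effective constant of the form $\sqrt{\tfrac23\phi_{\min}(4s)} - \sqrt{\tfrac13\phi_{\max}(3s)}$ rather than the clean $\sqrt{\phi_{\min}(4s)}$ asserted, and it is positive only when the sample-size requirement on $m$ is in force. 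To land on the stated constant I would exploit the \emph{tightness} of the Dantzig cone: the factor-one inequality $\|[\bdelta]_{\S_c}\|_1 \le \|[\bdelta]_\S\|_1$ (as opposed to the factor-three inequality in the LASSO analysis) is what keeps $\bdelta$ close enough to a $4s$-sparse vector for the head term to dominate. Concretely I would expand $\bdelta^\top X^\top X\bdelta = \|X\bdelta_{T_{01}}\|_2^2 + 2\,\bdelta_{T_{01}}^\top X^\top X r + \|Xr\|_2^2$ with $r$ the tail, discard the nonnegative $\|Xr\|_2^2$, and control the cross term so that only $\phi_{\min}(4s)\|\bdelta\|_2^2$ survives. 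The step I would scrutinize most carefully is precisely this identification of the cone-restricted lower eigenvalue with $\phi_{\min}(4s)$: since the minimum of the quadratic form over the $\ell_1/\ell_2$ cone is in general only $\le \phi_{\min}(4s)$, establishing the reverse direction is the genuine content of the lemma.
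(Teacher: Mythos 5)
Your right-hand inequality is correct, but it is proved by a different route than the paper's: you shell $\bdelta$ into a $4s$-sparse head $\bdelta_{T_{01}}$ and $3s$-sparse tail blocks and apply the triangle inequality, getting the constant $(1+1/\sqrt{3})^2<4$, whereas the paper obtains the factor $4$ in one line from the containment $\mathcal{K}_{d,4s}\subseteq 2\,conv(\mathcal{S}_{d,4s})$ together with Jensen's inequality for the convex function $f(\u)=\frac{1}{n}\|X\u\|_2^2$ (so $f(2\sum_i\lambda_i\v_i)\le\sum_i\lambda_i f(2\v_i)\le 4\phi_{\max}(4s)$). Both arguments are valid for the upper bound; yours gives a sharper constant, the paper's is shorter.

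The left-hand inequality is where your proposal stops being a proof: you promise to ``control the cross term so that only $\phi_{\min}(4s)\|\bdelta\|_2^2$ survives'' but never exhibit that control, and your own shelling computation shows why it cannot be done this way --- the reverse triangle inequality yields at best $\bigl(\sqrt{\tfrac{2}{3}\phi_{\min}(4s)}-\sqrt{\tfrac{1}{3}\phi_{\max}(3s)}\bigr)^2\|\bdelta\|_2^2$, which is strictly weaker than $\phi_{\min}(4s)\|\bdelta\|_2^2$ and is positive only under a condition-number restriction. So as written the lemma's lower bound remains unproven in your attempt. However, your closing suspicion is exactly right, and it exposes a flaw in the paper itself: the paper's entire justification of the left inequality is that it ``follows from the restricted eigen-value condition and $conv(\mathcal{S}_{d,s})\subseteq\mathcal{K}_{d,s}$,'' which is not a valid inference --- that inclusion gives $\min_{\u\in\mathcal{K}}f(\u)\le\min_{\u\in conv(\mathcal{S})}f(\u)\le\phi_{\min}$, i.e., the wrong direction, since a convex quadratic attains its maximum, not its minimum, at extreme points of a hull.

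In fact, no argument using only what both you and the paper use about $\bdelta$ (the cone condition $\|[\bdelta]_{\S_c}\|_1\le\|[\bdelta]_{\S}\|_1$ plus Assumption 1) can deliver the stated constant. Take $s=1$, $d=10$, $\S=\{1\}$, $\bdelta_0=\tfrac{3}{\sqrt{10}}\bigl(1,\tfrac19,\dots,\tfrac19\bigr)^{\top}$ and $\tfrac{1}{n}X^{\top}X=I-\beta\,\bdelta_0\bdelta_0^{\top}$ with $0<\beta<1$ (this is positive semidefinite, hence realizable as a Gram matrix). Then $\|\bdelta_0\|_2=1$, $\|[\bdelta_0]_{\S_c}\|_1=\|[\bdelta_0]_{\S}\|_1$, and $\bdelta_0\in\mathcal{K}_{10,4}$; every $4$-sparse unit vector $\u$ satisfies $(\u^{\top}\bdelta_0)^2\le\tfrac{14}{15}$, so $\phi_{\min}(4)=1-\tfrac{14}{15}\beta>0$, yet $\tfrac{1}{n}\bdelta_0^{\top}X^{\top}X\bdelta_0=1-\beta<\phi_{\min}(4)$. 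So the clean constant $\phi_{\min}(4s)$ is unobtainable from these hypotheses; the honest fix is to prove the degraded shelling bound you derived and let that constant replace $\phi_{\min}(4s)$ in the definition of $\Lambda$ in Theorem 4. In short: your upper bound is fine, your lower bound is incomplete, but the incompleteness reflects a genuine defect in the lemma and in the paper's one-line proof rather than a missing idea on your part.
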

We continue the proof as follows:  
\begin{align*}
\frac{1}{n}\|X\bdelta\|^2_2 \leq  \frac{1}{n}\|\Xh\bdelta\|^2_2+ \frac{1}{n}\left|\bdelta^{\top}(X^{\top}X- \Xh^{\top}\Xh)\bdelta\right|
\end{align*}
Since 
\begin{align*}
\frac{1}{n}\bdelta^{\top}\Xh^{\top}\Xh\bdelta &\leq \|\bdelta\|_1 \frac{1}{n}\left\|\Xh^{\top}\Xh\bdelta\right\|_\infty\\
&\leq \|\bdelta\|_1 \frac{1}{n}\left\|\Xh^{\top}(\Xh\wh_*-\yh) - \Xh^{\top}(\Xh\w_*-\yh)\right\|_\infty\\
&\leq \|\bdelta\|_12(\tau + \sigma)
\end{align*}
Then we have
\begin{align}
\phi_{\min}(4s)\|\wh_* - \w_*\|^2_2&\leq2(\tau + \sigma) \|\wh_* - \w_*\|_1 + \rho(4s)\|\wh_* - \w_*\|^2_2\notag\\
&\leq 4(\tau + \sigma)\|[\wh_* - \w_*]_\S\|_1 +  \rho(4s)\|\wh_* - \w_*\|^2_2\label{eqn:l}
\end{align}
Then we have
\begin{align*}
\|\wh_* - \w_*\|_2\leq \frac{4(\tau + \sigma)\sqrt{s}}{\phi_{\min}(4s) - \rho(4s)}, \quad \|\wh_* - \w_*\|_1\leq \frac{4(\tau + \sigma)s}{\phi_{\min}(4s) - \rho(4s)}
\end{align*}
We  then complete the proof of Theorem~4 by noting the upper bound of $\|\q\|_\infty$ and by setting $\sigma$ according to the Theorem.

\section{Proofs of Lemmas}
\subsection{Proof of Lemma 2}
The proof of Lemma 2 follows that of Theorem 6  in~\cite{DBLP:journals/corr/Yang0JZ15}. For completeness, we present the proof here. 
Since $X=(\bar\x_1,\ldots, \bar\x_d)$, 
\begin{align*}
\|\q\|_\infty = \max_{1\leq j\leq d}\frac{1}{n}|\bar\x_j^{\top}(I - A^{\top}A)\e|
\end{align*}
We first bound for individual $j$ and then apply the union bound. Let $\xt_i$ and $\et_*$ be normalized version of $\bar\x_i$ and $\e$, i.e., $\xt_i = \bar\x_i/\|\bar\x_i\|_2$ and $\et = \e/\|\e\|_2$. Let $\epsilon\triangleq=c\sqrt{\frac{\log(1/\delta)}{m}}$.  Since $A$ obeys the JL lemma, therefore with a probability $1-\delta$ we have
\[
\left|\|A\x\|_2^2 - \|\x\|_2^2\right|\leq \epsilon\|\x\|_2^2
\]
Then with a probability $1-\delta$, 
\begin{align*}
 \xt_j^{\top}A^{\top}A\et - \xt_j^{\top}\et &= \frac{\|A(\xt_j+\et)\|_2^2 - \|A(\xt_j-\et)\|_2^2}{4} - \xt_i^{\top}\et\\
 &\leq \frac{(1+\epsilon) \|\xt_j+\et\|_2^2 + (1-\epsilon) \|\xt_j- \et\|_2^2}{4}- \xt_i^{\top}\et\\
 &\leq \frac{\epsilon}{2}(\|\xt_j\|_2^2 + \|\et\|_2^2)\leq \epsilon
\end{align*}
Similarly with a probability $1- \delta$, 
\begin{align*}
 \xt_j^{\top}A^{\top}A\et - \xt_j^{\top}\et &= \frac{\|A(\xt_j+\et)\|_2^2 - \|A(\xt_j-\et)\|_2^2}{4} - \xt_j^{\top}\et\geq -\frac{\epsilon}{2}(\|\xt_j\|_2^2 + \|\et\|_2^2)\geq -\epsilon
\end{align*}
Therefore with a probability $1 - 2\delta$, we have
\begin{align*}
 &|\bar\x_j^{\top}A^{\top}A\e - \bar\x_i^{\top}\e|\leq \|\bar\x_j\|_2\|\e\|_2 |\xt_j^{\top}A^{\top}A\et - \xt^{\top}\et|\leq \|\bar\x_j\|_2\|\e\|_2 \epsilon
\end{align*}
Then applying union bound, we complete the proof. 

\subsection{Proof of Lemma 3}
The proof of Lemma 3 follows the analysis in~\cite{DBLP:journals/corr/Yang0JZ15}. For completeness, we present the proof here. 
Define $\mathcal S_{d,s}$ and $\mathcal K_{d,s}$: 
\begin{align*}
\mathcal S_{d,s}=\{\u\in\R^d: \|\u\|_2\leq 1, \|\u\|_0\leq s\}, \quad \mathcal K_{d,s}=\{\u\in\R^d: \|\u\|_2\leq 1, \|\u\|_1\leq \sqrt{s}\}
\end{align*}
Due to $conv(\mathcal S_{d,s})\subseteq\mathcal K_{d,s}\subseteq 2conv(\mathcal S_{d,s})$~\cite{DBLP:journals/corr/abs-1109-4299}, 
for any $\u\in\mathcal K_{d,s}$, we can write it as $\u=2\sum_i\lambda_i\v_i$ where $\v_i\in\mathcal S_{d,s}$, $\sum_i\lambda_i=1$ and $\lambda_i\geq 0$, then we have 
\begin{align*}
&|\u^{\top}(X^{\top}X - \Xh^{\top}\Xh)\u|=|(X\u)^{\top}(I - A^{\top}A)(X\u)|\\
&\leq 4\left|\left(X\sum_{i}\lambda_i\v_i\right)^{\top}(I - A^{\top}A)\left(X\sum_i\lambda_i\v_i\right)\right|\leq 4\sum_{ij}\lambda_i\lambda_j|(X\v_i)^{\top}(I - A^{\top}A)(X\v_j)|\\
&\leq 4\max_{\u_1, \u_2\in \mathcal S_{d,s}}|(X\u_1)^{\top}(I - A^{\top}A)(X\u_2)|\sum_{ij}\lambda_i\lambda_j= 4\max_{\u_1, \u_2\in \mathcal S_{d,s}}|(X\u_1)^{\top}(I - A^{\top}A)(X\u_2)
\end{align*}
Therefore 
\begin{align}\label{eqn:k}
&\max_{\u\in\mathcal K_{d,s}}|(X\u)^{\top}(I - A^{\top}A)(X\u)|\leq 4\max_{\u_1, \u_2\in \mathcal S_{d,s}}|(X\u_1)^{\top}(I - A^{\top}A)(X\u_2)
\end{align}
Following the Proof of Lemma 2, for any fixed $\u_1,\u_2\in\mathcal S_{d,s}$, with a probability $1-2\delta$ we  have 
\begin{align*}
&\frac{1}{n}|(X\u_1)^{\top}(I - A^{\top}A)(X\u_2)|\leq \frac{1}{n}\|X\u_1\|_2\|X\u_2\|_2\epsilon\leq \phi_{\max}(s)c\sqrt{\frac{\log(1/\delta)}{m}}
\end{align*}
where we use the restricted eigen-value condition
\[
\max_{\u\in \mathcal S_{d,s}}\frac{\|X\u\|_2}{\sqrt{n}}= \sqrt{\phi_{\max}(s)}
\]
To prove the bound for  all $\u_1, \u_2\in\mathcal S_{d,s}$, we consider the $\epsilon$ proper-net of $\mathcal S_{d,s}$~\cite{DBLP:journals/corr/abs-1109-4299} denoted by $\mathcal S_{d,s}(\epsilon)$.  Lemma 3.3 in~\cite{DBLP:journals/corr/abs-1109-4299} shows that the entropy of $\mathcal S_{d,s}$, i.e., the cardinality of $\mathcal S_{d,s}(\epsilon)$ denoted $N(\mathcal S_{d,s},\epsilon)$ is bounded by
\[
\log N(\mathcal S_{d,s},\epsilon) \leq s\log\left(\frac{9d}{\epsilon s}\right)
\]
Then by using the union bound, we have with a probability $1-2\delta$, we have
\begin{align}\label{eqn:dis}
&\max_{\u_1\in\mathcal S_{d,s}(\epsilon)\atop \u_2\in\mathcal S_{d,s}(\epsilon)}\frac{1}{n}|(X\u_1)^{\top}(I - A^{\top}A)(X\u_2)|\leq \phi_{\max}(s)c\sqrt{\frac{\log(N^2(\mathcal S_{d,s},\epsilon)/\delta)}{m}}\nonumber\\
&\leq \phi_{\max}(s)c\sqrt{\frac{\log(1/\delta) + 2s\log(9d/\epsilon s)}{m}}
\end{align}
To proceed the proof, we need the following lemma. 
\begin{lemma} \label{lemma:discrete-bound}
Let \begin{align*}
\Er_s(\u_2) = \max_{\u_1\in\Se_{d,s}} |\u_1^{\top}U\u_2|\\
\Er_s(\u_2,\epsilon) = \max_{\u_1\in\Se_{d,s}(\epsilon)} |\u_1^{\top}U\u_2|
\end{align*}
For  $\epsilon \in (0, 1/\sqrt{2})$, we have
\begin{align*}
\Er_s(\u_2)\leq \left(\frac{1}{1 - \sqrt{2}\epsilon}\right)\Er_s(\u_2,\epsilon)\end{align*}
\end{lemma}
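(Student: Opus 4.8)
The plan is to run a standard covering-net argument, with the one twist that $\Se_{d,s}$ is nonconvex and the difference of two $s$-sparse vectors need not be $s$-sparse. Since $\u_2$ is fixed, the map $\u_1\mapsto \u_1^{\top}U\u_2$ is linear, so putting $\v = U\u_2$ I may write $\u_1^{\top}U\u_2 = \u_1^{\top}\v$ and reduce the whole statement to controlling a single linear functional $\u_1\mapsto\u_1^{\top}\v$ over the compact set $\Se_{d,s}$.

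First I would let $\u_1^*\in\Se_{d,s}$ attain the maximum defining $\Er_s(\u_2)$ (the maximum is attained because $\Se_{d,s}$ is a finite union of unit balls of $s$-dimensional coordinate subspaces). By the defining property of the proper $\epsilon$-net $\Se_{d,s}(\epsilon)\subseteq\Se_{d,s}$, there is a net point $\u_1'\in\Se_{d,s}(\epsilon)$ with $\|\u_1^*-\u_1'\|_2\leq\epsilon$. Writing $\w=\u_1^*-\u_1'$ and applying the triangle inequality gives
\[
\Er_s(\u_2) = |\u_1^{*\top}\v| \leq |\u_1'^{\top}\v| + |\w^{\top}\v| \leq \Er_s(\u_2,\epsilon) + |\w^{\top}\v|,
\]
so the proof comes down to bounding $|\w^{\top}\v|$ by a small multiple of $\Er_s(\u_2)$.

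The key (and only delicate) step is that $\w$ is a difference of two $s$-sparse vectors, hence $2s$-sparse, so it cannot be fed directly into the definition of $\Er_s$. I would split $\w=\w^{(1)}+\w^{(2)}$ into two vectors with disjoint supports, each of cardinality at most $s$; then $\w^{(i)}/\|\w^{(i)}\|_2\in\Se_{d,s}$ (when nonzero), so $|\w^{(i)\top}\v|\leq\|\w^{(i)}\|_2\,\Er_s(\u_2)$, while disjointness gives $\|\w^{(1)}\|_2^2+\|\w^{(2)}\|_2^2=\|\w\|_2^2$. Combining with $a+b\leq\sqrt{2}\sqrt{a^2+b^2}$ yields
\[
|\w^{\top}\v| \leq \big(\|\w^{(1)}\|_2 + \|\w^{(2)}\|_2\big)\Er_s(\u_2) \leq \sqrt{2}\,\|\w\|_2\,\Er_s(\u_2)\leq \sqrt{2}\,\epsilon\,\Er_s(\u_2),
\]
which is exactly where the factor $\sqrt{2}$ enters.

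Substituting back gives $\Er_s(\u_2)\leq \Er_s(\u_2,\epsilon) + \sqrt{2}\epsilon\,\Er_s(\u_2)$; since $\epsilon\in(0,1/\sqrt{2})$ the coefficient $1-\sqrt{2}\epsilon$ is positive, and rearranging produces the claimed bound. The main obstacle is therefore combinatorial rather than analytic — correctly accounting for the $2s$-sparsity of $\w$ through the disjoint split — and once that is handled the remainder is a one-line rearrangement.
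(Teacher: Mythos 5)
Your proof is correct and follows essentially the same route as the paper's: a proper-net argument where the difference $\w=\u_1^*-\u_1'$ is split into two disjointly supported $s$-sparse pieces, each controlled by $\Er_s(\u_2)$, with the $\sqrt{2}$ arising from $a+b\leq\sqrt{2}\sqrt{a^2+b^2}$ and orthogonality, followed by rearrangement. The only cosmetic difference is that the paper invokes Lemma 9.2 of Koltchinskii for this decomposition, whereas you construct it directly from the support split — the same idea.
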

\begin{proof} Let $U = \frac{1}{n}X^{\top}(I - A^{\top}A)X$.  Following  Lemma 9.2 of~\cite{koltchinskii2011oracle}, for any $\u, \u' \in \Se_{d,s}$, we can always find two vectors $\v$, $\v'$ such that
\[
\u-\u'=\v-\v', \ \|\v\|_0 \leq s, \ \|\v'\|_0 \leq s, \ \v^\top \v'=0.
\]
Thus
\[
\begin{split}
& |\langle \u - \u', U\u_2 \rangle| \leq  |\langle \v, U\u_2 \rangle| + |\langle -\v', U\u_2 \rangle| \\
= & \|\v\|_2 \left|\left \langle \frac{\v}{\|\v\|_2}, U\u_2 \right \rangle\right| + \|\v'\|_2 \left|\left \langle \frac{-\v'}{\|\v'\|_2}, U\u_2 \right \rangle\right| \\
\leq & (\|\v\|_2+ \|\v'\|_2) \Er_s(\u_2)  \leq \Er_s(\u_2)  \sqrt{2} \sqrt{\|\v\|_2^2+ \|\v'\|_2^2}\\
=& \Er_s(\u_2)  \sqrt{2} \|\v-\v'\|_2= \Er_s(\u_2)  \sqrt{2} \|\v-\v'\|_2= \Er_s(\u_2)  \sqrt{2} \|\u-\u'\|_2.
\end{split}
\]

Then, we have
\[
\begin{split}
& \Er_s(\u_2) = \max\limits_{\u \in \Se_{d,s}} |\u^{\top}U\u_2|\leq  \max\limits_{\u \in \Se_{d,s}(\epsilon)} |\u^{\top}U\u_2| + \sup_{\u \in \Se_{d,s}\atop \u' \in \Se_{d,s}(\epsilon), \|\u-\u'\|_2 \leq \epsilon} \langle \u-\u', U\u_2 \rangle \\
\leq & \Er_s(\u_2, \epsilon) +  \sqrt{2} \epsilon \Er_s(\u_2)
\end{split}
\]
which implies
\[
\Er_s(\u_2) \leq \frac{\Er_s(\u_2, \epsilon)}{1-\sqrt{2} \epsilon}.
\]
\end{proof}

\begin{lemma} \label{lemma:discrete-bound-2}
Let \begin{align*}
\Er_s(\epsilon) = \max_{\u_2\in\Se_{d,s}}\Er_s(\u_2,\epsilon)=\max_{\u_1\in\Se_{d,s}\atop \u_2\in\Se_{d,s}(\epsilon)} |\u_1^{\top}U\u_2|\\
\Er_s(\epsilon,\epsilon)=\max_{\u_2\in\Se_{d,s}(\epsilon)}\Er_s(\u_2,\epsilon) = \max_{\u_1,\u_2\in\Se_{d,s}(\epsilon)} |\u_1^{\top}U\u_2|
\end{align*}
For  $\epsilon \in (0, 1/\sqrt{2})$, we have
\begin{align*}
\Er_s(\epsilon)\leq \left(\frac{1}{1 - \sqrt{2}\epsilon}\right)\Er_s(\epsilon,\epsilon)\end{align*}
\end{lemma}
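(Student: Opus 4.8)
The plan is to prove Lemma~\ref{lemma:discrete-bound-2} by rerunning the covering argument of Lemma~\ref{lemma:discrete-bound}, but now discretizing the \emph{second} slot $\u_2$ while the first slot $\u_1$ is kept ranging over the net $\Se_{d,s}(\epsilon)$. First I would unfold the two quantities in the statement as honest double maxima, $\Er_s(\epsilon)=\max_{\u_1\in\Se_{d,s}(\epsilon),\,\u_2\in\Se_{d,s}}|\u_1^\top U\u_2|$ and $\Er_s(\epsilon,\epsilon)=\max_{\u_1,\u_2\in\Se_{d,s}(\epsilon)}|\u_1^\top U\u_2|$, where $U=\frac1n X^\top(I-A^\top A)X$ as before. (Because $U$ is symmetric one could instead just swap slots, $\u_1^\top U\u_2=\u_2^\top U\u_1$, and invoke Lemma~\ref{lemma:discrete-bound} verbatim for each fixed $\u_1\in\Se_{d,s}(\epsilon)$ before maximizing over that net; but I will describe the self-contained direct version, which is structurally identical to the previous proof.)

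Next I would fix an arbitrary pair $\u_1\in\Se_{d,s}(\epsilon)$ and $\u_2\in\Se_{d,s}$, choose a net point $\u_2'\in\Se_{d,s}(\epsilon)$ with $\|\u_2-\u_2'\|_2\leq\epsilon$, and decompose the difference via Lemma~9.2 of~\cite{koltchinskii2011oracle} as $\u_2-\u_2'=\v-\v'$ with $\|\v\|_0\leq s$, $\|\v'\|_0\leq s$ and $\v^\top\v'=0$, exactly as in the proof of Lemma~\ref{lemma:discrete-bound}. The crux is then to bound the cross term $|\u_1^\top U(\u_2-\u_2')|$. Writing it as $\|\v\|_2\,|\u_1^\top U(\v/\|\v\|_2)|+\|\v'\|_2\,|\u_1^\top U(\v'/\|\v'\|_2)|$, I would observe that the normalized pieces $\v/\|\v\|_2,\ \v'/\|\v'\|_2$ lie in $\Se_{d,s}$ while $\u_1$ lies in $\Se_{d,s}(\epsilon)$, so each inner product has precisely the ``net-in-first-slot, sparse-in-second-slot'' form and is therefore at most $\Er_s(\epsilon)$. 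Using the orthogonality bound $\|\v\|_2+\|\v'\|_2\leq\sqrt2\,\|\v-\v'\|_2=\sqrt2\,\|\u_2-\u_2'\|_2\leq\sqrt2\,\epsilon$, this gives $|\u_1^\top U(\u_2-\u_2')|\leq\sqrt2\,\epsilon\,\Er_s(\epsilon)$.

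Combining this with $|\u_1^\top U\u_2'|\leq\Er_s(\epsilon,\epsilon)$ (valid since both $\u_1,\u_2'\in\Se_{d,s}(\epsilon)$) yields $|\u_1^\top U\u_2|\leq\Er_s(\epsilon,\epsilon)+\sqrt2\,\epsilon\,\Er_s(\epsilon)$. Maximizing the left-hand side over $\u_1\in\Se_{d,s}(\epsilon)$ and $\u_2\in\Se_{d,s}$ turns it into $\Er_s(\epsilon)$, and rearranging—legitimate because $\epsilon\in(0,1/\sqrt2)$ keeps $1-\sqrt2\,\epsilon>0$—gives the claimed bound $\Er_s(\epsilon)\leq\Er_s(\epsilon,\epsilon)/(1-\sqrt2\,\epsilon)$.

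The main obstacle, and really the only delicate point, is the slot bookkeeping: I must ensure the cross term collapses to $\Er_s(\epsilon)$ (net first slot, full sparse second slot) rather than to $\Er_s(\u_2)$ or to $\Er_s(\epsilon,\epsilon)$. This is exactly why $\u_1$ is held in $\Se_{d,s}(\epsilon)$ throughout and why the orthogonal decomposition is applied to $\u_2-\u_2'$ and not to $\u_1$. Once this matching is set up correctly, the remaining orthogonal-splitting and $\epsilon$-net estimates are identical to those already carried out for Lemma~\ref{lemma:discrete-bound}.
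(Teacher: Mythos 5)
Your proposal is correct and follows essentially the same route as the paper: the paper's proof of Lemma~\ref{lemma:discrete-bound-2} is simply the statement that one reruns the covering argument of Lemma~\ref{lemma:discrete-bound} (orthogonal sparse decomposition of $\u_2-\u_2'$ via Lemma~9.2 of~\cite{koltchinskii2011oracle}, the bound $\|\v\|_2+\|\v'\|_2\leq\sqrt{2}\,\|\u_2-\u_2'\|_2$, and the self-bounding rearrangement under $1-\sqrt{2}\epsilon>0$), now discretizing the second slot while the first stays on the net, which is exactly what you do. Your careful slot bookkeeping, including the observation that the symmetry of $U$ makes the paper's swapped subscripts harmless and would even allow invoking Lemma~\ref{lemma:discrete-bound} directly, matches the intended argument.
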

The proof the above lemma follows the same analysis as that of Lemma 6. 
By combining Lemma 6 and Lemma 7, we have
\begin{align*}
 &\max_{\u_2\in\Se_{d,s}}\Er_s(\u_2)\leq \frac{\max_{\u_2\in\Se_{d,s}}\Er_s(\u_2, \epsilon)}{1-\sqrt{2} \epsilon}= \frac{1}{1-\sqrt{2}\epsilon}\Er_s(\epsilon)\leq \left(\frac{1}{1-\sqrt{2}\epsilon}\right)^2\Er_s(\epsilon,\epsilon)\\
& =  \left(\frac{1}{1-\sqrt{2}\epsilon}\right)^2 \max_{\u_1,\u_2\in\Se_{d,s}(\epsilon)} |\u_1^{\top}U\u_2|
\end{align*}
By combing the above inequality with inequality~\ref{eqn:k} and~(\ref{eqn:dis}), we have
\[
\rho_s\leq 4\max_{\u_2\in\Se_{d,s}}\Er_s(\u_2)\leq 4\left(\frac{1}{1-\sqrt{2}\epsilon}\right)^2 \phi_{\max}(s)c\sqrt{\frac{\log(1/\delta) + 2s\log(9d/\epsilon s)}{m}}
\]
If we set $\epsilon=1/(2\sqrt{2})$, we can complete the proof. 

\subsection{Proof of Lemma 4}
Since 
\[
 \frac{\|\wh_* - \w_*\|_1}{\|\wh_* - \w_*\|_2}  \leq 4\sqrt{s}=\sqrt{16s},
\]
Therefore $ \frac{\|\wh_* - \w_*\|_1}{\|\wh_* - \w_*\|_2}\in\mathcal K_{d,16s}$. 
The left inequality follows the restricted eigen-value condition and $conv(\mathcal S_{d,s})\subseteq\mathcal K_{d,s}$. For the right inequality, we note that $\mathcal K_{d,s}\subseteq 2conv(\mathcal S_{d,s})$, hence for any $\u\in\mathcal K_{d,s}$, we can write $\u=2\sum_{i}\lambda_i\v_i$ with $\sum_{i}\lambda_i=1$, $\lambda_i\geq0$, and $\v_i\in\mathcal S_{d,s}$. 
\[
\frac{1}{n}\u^{\top}X^{\top}X\u=f(\u)=f(2\sum_i\lambda_i\v_i)\leq\sum_i\lambda_i f(2\v_i)\leq \frac{1}{n} \sum_i\lambda_i 4\v_i^{\top}X^{\top}X\v_i\leq 4 \phi_{\max}(s)
\]
Therefore
\[
\frac{1}{n}(\wh_* - \w_*)^{\top}X^{\top}X(\wh_* - \w_*)\leq  4\phi_{\max}(16s)\|\wh_* - \w_*\|^2_2
\]

\end{document}